\definecolor{webred}{rgb}{0.75,0,0}
\definecolor{webgreen}{rgb}{0,0.75,0}
\newtheorem{thm}{Theorem}[section]
\newtheorem{hyp}[thm]{Assumption}
\theoremstyle{definition}
\newtheorem{defn}[thm]{Definition}
\theoremstyle{remark}
\newtheorem{rem}[thm]{Remark}
\numberwithin{equation}{section}
\renewcommand{\Re}{\operatorname{Re}}
\newcommand{\Div}{\operatorname{\mathrm{div}}}
\newcommand{\rot}{\operatorname{\mathrm{curl}}}
\newcommand{\db}[1]{_{\raise-0.3ex\hbox{$\scriptstyle #1$}}}
\newcommand{\dd}[1]{_{\raise-1.5pt\hbox{$\scriptstyle #1$}}}
\newcommand{\di}{\displaystyle}
\newcommand{\dr}{{\rm d}}
\newcommand{\iso}{_{\sf +}}
\newcommand{\con}{_{\sf -}}
\newcommand  {\N}{{\mathbb N}}
\newcommand  {\R}{{\mathbb R}}
\newcommand {\Id}{\mathbb {I}}
\newcommand{\BB}{\boldsymbol{\mathsf B}}
\newcommand  {\EE}{\boldsymbol{\mathsf E}}
\newcommand  {\HH}{\boldsymbol{\mathsf H}}
\newcommand  {\LL}{\boldsymbol{\mathsf L}}
\renewcommand  {\L}{{\mathrm L}}
\newcommand  {\QQ}{\boldsymbol{\mathsf Q}}
\newcommand  {\RR}{\boldsymbol{\mathsf R}}
\newcommand  {\VV}{\,\underline{\!{\boldsymbol{\mathfrak H}}\!}\,}
\newcommand  {\WW}{\,\underline{\!{\boldsymbol{\mathfrak E}}\!}\,}
\newcommand  {\XX}{\boldsymbol{\mathsf X}}
\newcommand  {\mur}{{\boldsymbol{\mu_{r}}}}
\newcommand  {\g}{{\boldsymbol{\mathsf g}}}
\newcommand  {\jj}{{\boldsymbol{\mathsf j}}}
\newcommand  {\nn}{\boldsymbol{\mathsf n}}
\newcommand  {\uu}{\boldsymbol{\mathsf u}}
\newcommand  {\xx}{\boldsymbol{\mathsf x}}
\newcommand  {\yy}{\boldsymbol{\mathsf y}}
\newcommand  {\cH}{\mathcal{H}}
\newcommand  {\cC}{\mathcal{C}}
\newcommand  {\cL}{\mathcal{L}}
\newcommand  {\cO}{\mathcal{O}}
\newcommand  {\cU}{{\mathcal U}}
\newcommand  {\cV}{{\mathfrak H}}
\newcommand  {\cW}{{\mathfrak E}}
\newcommand  {\bH}{\mathbf{H}}
\newcommand  {\bL}{\mathbf{L}}
\newcommand  {\sE}{\mathsf{e}}
\newcommand  {\eps}{\mathsf{\varepsilon}}
\newcommand  {\sv}{\mathfrak{h}}
\newcommand  {\fke}{\mathfrak{e}}
\newcommand  {\gm}{\mathfrak{m}}
\newcommand  {\bsE}{\boldsymbol{\mathsf{e}}}
\newcommand  {\bs}{\underline \sigma}
\newcommand  {\bm}{\underline \mu}
\newcommand{\D}{\mathsf D}
\definecolor{mpurple}{rgb}{0.6,0,0.8}
\definecolor{myblue}{rgb}{0.,0.2,0.8}
\definecolor{mygreen}{rgb}{0,0.75,0.0}
\definecolor{mred}{rgb}{0.9,0,0}
\definecolor{mbrun}{rgb}{0.8,0.5,0}
\newcommand{\Bk}{\color{black}}
\begin{document}

\title[Asymptotic modelling of a skin effect in magnetic conductors]
{Asymptotic modelling of a skin effect in magnetic conductors}

\author{Victor P\'eron}
\address{Laboratoire LMAP, UMR CNRS 5142,
Universit\'e de Pau et des Pays de l'Adour, 
64013 PAU cedex, France}

\begin{abstract}
We consider the time-harmonic Maxwell equations set on a domain made of two subdomains $\Omega_{-}$ and $\Omega_{+}$, such that $\Omega_{-}$ represents a magnetic conductor and  $\Omega_{+}$ represents a non-magnetic material, and the relative magnetic permeability  $\mu_{r}$  between the two materials is very high. Assuming smoothness for the interface between the subdomains and regularity of the data, the electric field solution of the Maxwell equations possesses an asymptotic expansion in powers of the parameter $\eps=1/ \sqrt{\mu_{r}}$ with profile terms rapidly decaying inside the magnetic conductor. We make explicit the first terms of this expansion. As an application of the asymptotic expansion we obtain the asymptotic behavior of a skin depth function that allows to measure the boundary layer phenomenon at large relative permeability. As another application of this expansion  we give elements of proof for the derivation of impedance boundary conditions (IBCs) up to  the third order of approximation with respect to the parameter $\varepsilon$, and we prove error estimates for the IBCs. 
\end{abstract}

\date{\today, Version 1}

\maketitle

\noindent {\bf Keywords}: {\it Maxwell equations, High relative permeability, Asymptotic expansions, Impedance boundary conditions}
\vskip0.5cm

\tableofcontents

\section{Introduction}

This work is concerned with the time-harmonic Maxwell equations in linear materials presenting high contrast of magnetic permeabilities. The  domain of interest is made up of two subdomains that represent a magnetic conductor and a non-magnetic material, and we assume that the relative magnetic permeability $\mu_{r}$  between the materials is very high. In this paper we pursue mathematical efforts that we undertook  in a  previous work \cite{peron2024uniform} on asymptotic modelling of  a {\it skin effect} that occurs close to the surface of a magnetic conductor  in the context above. 

Assuming smoothness for the interface between the subdomains, we first prove that the electric field solution of the Maxwell equations possesses a multiscale expansion in powers of  $\eps=\dfrac1{\sqrt{\mu_{r}}}$ with profile terms rapidly decaying inside the magnetic conductor, cf. Sec. \ref{S3}, and also Sec. \ref{AppA}. This expansion allows to describe a boundary layer phenomenon which is different from the anomalous  skin effect that occurs at high conductivity in non-magnetic materials, see, {\it e.g.} \cite{HJN08,CDFP11}. We provide the first terms of this expansion in  Section \ref{S3.1}. 
We validate this expansion by proving error estimates (i.e. estimates for remainders) in Section \ref{Svalid-bis}. The proof is based on uniform a priori estimates for Maxwell transmission problem (cf. \cite[Th. 2.1]{peron2024uniform}) that we remind in Section \ref{S2}. We make also explicit the first terms of this expansion up to the third order of approximation with respect to $\eps$  in the non-magnetic region, and we derive the first profiles up to the second order of approximation  in the magnetic part. 

 The multiscale expansion for the electric field as well as error estimates for this expansion are new results. In \cite{peron2024uniform} we derived rigorously a multiscale expansion for the magnetic field  in powers of  $\eps$ for this problem, and we made explicit the first terms of this expansion. Several works are devoted also to asymptotic expansions of the electromagnetic field in a domain made of two subdomains {\em when the interface is smooth}: see {\em e.g.} \cite{S83,MS84,MS85,PePo21,APPK21,AP23} for plane interface and eddy current approximations   at high conductivity or at large relative magnetic permeability, and \cite{HJN08,CDP09,DFP10,CDFP11} for a three-dimensional model of skin effect in electromagnetism at high conductivity. We refer also the reader to the work in Ref. \cite{BIJ20} on asymptotic modelling of skin effects in coaxial cables.

Then we introduce a ``skin depth'' function to measure the boundary layer phenomenon. As an application of the multiscale expansion for the electric field we obtain the asymptotic behavior of this function at large relative permeability, cf. Theorem  \ref{T1} in Sec. \ref{S3.2}:
\begin{equation*}
   \cL(\mu_{-},\sigma_{-},y_{\alpha})=
   \ell(\mu_{-},\sigma_{-}) \phi(\sqrt{{\omega\varepsilon_0} /{\sigma_{-}}}) \Big(1+\cH(y_{\alpha})\, \ell(\mu_{-},\sigma_{-})  \phi(\sqrt{{\omega\varepsilon_0} /{\sigma_{-}}}) + \cO(\mu_{r}^{-1})\Big) \, .
 \end{equation*}
Here $\mu_{-}$, and $\sigma_{-}$ represent the magnetic permeability and the conductivity of the magnetic conductor respectively, $\ell(\mu_{-},\sigma_{-})$ is the classical skin depth, $\phi$ is a scalar function defined by Eq. \eqref{Ephi} (cf. Sec. \ref{S3.2}) such that  $\phi(\delta)\to 1$ as $\delta\to0$ (large conductivity ($\sigma_{-}$)/low frequency  
 limit) and  $\phi(\delta)\sim \sqrt2 \delta$ as $\delta\to\infty$ (high frequency/small conductivity limit), $y_\alpha$, $\alpha=1,2$, are tangential coordinates on the interface $\Sigma$,  and $\cH(y_{\alpha})$ is the mean curvature of  $\Sigma$.   
Then we compare this result with several results on this topic in the literature \cite{DFP10,CDFP11,PePo21}, cf. Sec. \ref{S4.2}.   We refer also to the work in Ref. \cite{CDP19} where the authors measure another boundary layer phenomenon in spherical domains by introducing similarly a suitable characteristic function. 

 As another application of the asymptotic expansion for the electric field it is possible to derive impedance boundary conditions (IBCs) on the interface up to the third order of approximation with respect to the parameter $\varepsilon$, cf. Sec. \ref{S5}.  IBCs are useful for numerical purposes since they allow to reduce the computational domain, see, {\it e.g.} \cite{artola1992diffraction,lafitte1993equations,senior1995approximate,bendali1996effect,ammari1998effective,nedelec2001,Mo03,HJN08,CoDaNi10,hoppe2018impedance,auvray2018asymptotic,stupfel2021well} where additional references may also be found.  The new IBCs allow to solve simpler problems set in the subdomain representing  the non-magnetic material.  The first order IBC is the perfectly insulating electric boundary conditions whereas the second and third order IBCs are  Leontovich type boundary conditions, cf. Eqs. \eqref{EIBC1}, \eqref{EIBC3}, and \eqref{EIBC4b} in Sec. \ref{IBC}, respectively. The new second order IBC coincides with the classical Leontovich condition for strongly absorbing material (large conductivity limit), cf. Remark \ref{Leon}. In \cite{ECCOMAS24} we developed briefly the asymptotic method for  deriving these IBCs. In this work we give elements of proof  for the derivation of IBCs, and we prove optimal error  estimates for the approximate models with IBCs (cf. Theorem \ref{TValid}).  
  
  We begin this paper by introducing the mathematical model, cf. Section \ref{S2}. We recall uniform a priori estimates for Maxwell solutions of the interface problem at high relative magnetic permeability (Sec. \ref{S2.1}), and we introduce an electric formulation for this problem (Sec. \ref{S2.2}).

\section{The mathematical model}
\label{S2}

Let $\Omega$ be a smooth bounded simply connected domain in ${\R}^3$, and $\Omega_{-}\subset\subset\Omega$ be a Lipschitz connected subdomain of $\Omega$. 
We denote by $\Gamma$ the boundary of $\Omega$, and by  $\Sigma$  the boundary of $\Omega_{-}$. 
Finally, we denote by $\Omega_{+}$ the complementary of $\overline{\Omega}_{-}$ in $\Omega$, cf. Figure~\ref{F1}. 

We consider the time-harmonic Maxwell equations given by Faraday's and Amp\`ere's laws in $\Omega$:
\begin{equation}
    \rot \ \EE - i \omega\bm \HH = 0 \quad \mbox{and}\quad
    \rot \ \HH + (i\omega\varepsilon_0 - \bs) \EE =   \jj
    \quad\mbox{in}\quad\Omega\, .
\label{MS}
\end{equation}
Here, $(\EE,\HH)$ represents the electromagnetic field, $\varepsilon_0$ is the electric permittivity, $\omega$ is the angular frequency ($\omega\neq0$ and the time convention is $\exp(-i\omega t)$), $\jj$ represents a current density  and is supposed to belong to  $\bL^2(\Omega)$, $\bs$ is the electric conductivity, and  $\bm$ is the magnetic permeability. We assume that the domain $\Omega$ is made of two connected subdomains $\Omega_+$ and $\Omega_-$ in which the coefficients $\bs$ and $\bm$ take different values $(\sigma_+>0,\sigma_->0)$ and $(\mu_{+}>0, \mu_{-}>0)$, respectively, and we denote by $\mu_{r}$ the relative magnetic permeability between the  subdomains  $\Omega_-$ and $\Omega_+$, and which is defined as: 
\begin{equation}
\label{E0}
 \mu_{r}=\mu_{-}/\mu_{+} \ .
\end{equation}

\begin{figure}[ht]
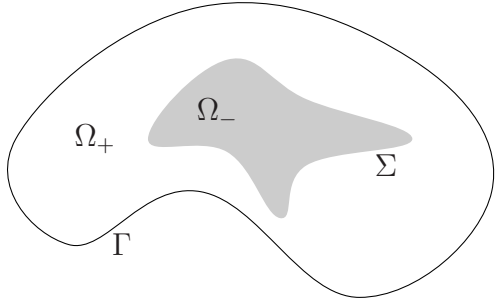

\begin{center}
\figinit{1.0pt}
\figpt 1:(-100,20)
\figpt 2:(-30,70)\figpt 3:(50,50)
\figpt 4:(80,0)\figpt 5:(30,-40)
\figpt 6:(-30,0)\figpt 7:(-80,-20)
\figpt 8:(-50,20)\figpt 9:(-20,50)\figpt 10:(50,20)\figpt11:(10,35)
\figpt 12:(10,10)\figpt 13:(0,-10)
\figpt 14:(-20,15)%
\figpt 16:(-70,20)\figpt 15:(-10,30)
\figpt 17:(40,10) \figpt 18:(-60,-20)
\psbeginfig{}
\pscurve[1,2,3,4,5,6,7,1,2,3]
\pssetfillmode{yes}\pssetgray{0.8}
\pscurve[8,9,11,10,12,13,14,8,9,11]
\pssetfillmode{no}\pssetgray{0}
\psendfig
\end{center}
\figvisu{\figBoxA}{
}{
\figwritew 15: $\Omega_{-}$(6pt)
\figwritec [16]{$\Omega_{+}$}
\figwritec [17]{$\Sigma$}
\figwritec [18]{$\Gamma$}
}
\centerline{\box\figBoxA}
\caption{The domain $\Omega$ and its subdomains $\Omega_-$ (magnetic conductor) and  $\Omega_+$  (non-magnetic material)}
\label{F1}
\end{figure}

To complement the Maxwell harmonic equations \eqref{MS}, we consider either the perfectly insulating electric boundary conditions 
\begin{equation}
\label{PIbc}
   \EE\cdot\nn=0 \quad\mbox{and}\quad \HH\times\nn=0\quad\mbox{on}\quad\Gamma\, ,
\end{equation}
or the perfectly conducting electric boundary conditions 
\begin{equation}
\label{Pcbc}
   \EE\times\nn=0 \quad\mbox{and}\quad \HH\cdot\nn=0\quad\mbox{on}\quad\Gamma\,.
\end{equation}

\subsection{Existence of solutions}
\label{S2.1}
Hereafter, we denote by  $\|\cdot \|_{0,\mathcal{O}}$ the norm in $\bL^2(\mathcal{O})$. We quote from \cite[Th. 2.1]{peron2024uniform}: 
\begin{thm}
\label{2T0}
Let $\sigma_{\pm}>0$, and  $\mu_{+}>0$. If the interface $\Sigma$ is Lipschitz, there are constants $\mu_{\star}$ and $C>0$ such that for all $\mu_{r}\geqslant\mu_{\star}$, Maxwell problem \eqref{MS} with boundary conditions \eqref{PIbc} and data $\jj\in\bL^2(\Omega)$ possesses a unique solution $(\EE,\HH)$ in $\bL^2(\Omega)^2$, which satisfies:
\begin{equation}
   \|\HH\|_{0,\Omega} + \|\EE\|_{0,\Omega} 
    + \sqrt{\mu_{r}}\, \|\HH\|_{0,\Omega_-}    \leqslant C \|\jj\|_{0,\Omega} . 
\label{3E1}
\end{equation}
A similar result holds for boundary conditions \eqref{Pcbc}. 
\end{thm}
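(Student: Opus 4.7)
The main content of \eqref{3E1} is the uniform-in-$\mu_r$ a priori estimate; existence and uniqueness will follow classically from it. My plan is therefore to derive \eqref{3E1} first, by a direct energy identity, and only afterwards to invoke a Fredholm argument.

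I would take the $\bL^2$ scalar product of Faraday's law $\rot\EE=i\omega\bm\HH$ with $\overline{\HH}$, and of the conjugate of Amp\`ere's law with $\EE$, then integrate by parts. The boundary contribution $\int_\Gamma(\nn\times\EE)\cdot\overline{\HH}$ vanishes because of $\HH\times\nn=0$ on $\Gamma$ (the PC case \eqref{Pcbc} being symmetric). Subtracting the two identities yields
\begin{equation*}
i\omega\int_\Omega\bm|\HH|^2\,dx \;=\; \int_\Omega\EE\cdot\overline{\jj}\,dx \;+\; \int_\Omega(i\omega\varepsilon_0+\bs)|\EE|^2\,dx.
\end{equation*}
Taking the real part and using $\bs\geqslant\min(\sigma_\pm)>0$, Cauchy--Schwarz gives $\|\EE\|_{0,\Omega}\leqslant C\|\jj\|_{0,\Omega}$; inserting this into the imaginary part then produces $\omega\int_\Omega\bm|\HH|^2\,dx\leqslant C\|\jj\|_{0,\Omega}^2$.

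Splitting the last integral into its $\Omega_\pm$ parts, and using $\mu_-=\mu_r\mu_+\geqslant\mu_+$ as soon as $\mu_r\geqslant 1$ (which is what the threshold $\mu_\star$ is there to enforce), I would deduce simultaneously $\|\HH\|_{0,\Omega}\leqslant C\|\jj\|_{0,\Omega}$ from $\mu_+\|\HH\|^2_{0,\Omega}\leqslant\int\bm|\HH|^2$, and $\sqrt{\mu_r}\,\|\HH\|_{0,\Omega_-}\leqslant C\|\jj\|_{0,\Omega}$ from $\mu_-\|\HH\|^2_{0,\Omega_-}\leqslant\int\bm|\HH|^2$. Adding these three bounds produces \eqref{3E1} with a constant $C$ independent of $\mu_r$. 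Uniqueness is then an immediate consequence by setting $\jj=0$; for existence I would pass to the electric-field reformulation, obtained by eliminating $\HH=(i\omega\bm)^{-1}\rot\EE$,
\begin{equation*}
\rot(\bm^{-1}\rot\EE)-(\omega^2\varepsilon_0+i\omega\bs)\EE \;=\; i\omega\jj\quad\text{in}\quad\Omega,
\end{equation*}
posed in the trace space associated to \eqref{PIbc} (resp.\ \eqref{Pcbc}). A Weber-type compact embedding of the Maxwell space, which only requires $\Sigma$ Lipschitz, makes the corresponding sesquilinear form Fredholm, so the uniqueness just obtained delivers existence.

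The subtle point is the factor $\sqrt{\mu_r}$: the electric principal part $\int\bm^{-1}|\rot\EE|^2$ \emph{degenerates} in $\Omega_-$ as $\mu_r\to\infty$, so $\bH(\rot)$-coercivity is lost in the singular limit, and any direct attempt to close the estimate in the electric formulation would fail. The remedy, which is really the crux of the argument, is to test against $\overline{\HH}$ rather than $\overline{\EE}$: the magnetic energy $\int\bm|\HH|^2$ naturally carries the large weight $\mu_-$, and the positivity of $\bs$ in Amp\`ere's law supplies exactly enough absorption to close the $\|\EE\|$ estimate. It is this choice of test function, together with $\sigma_->0$, that makes the very elementary identity above sufficient even in the high-permeability regime.
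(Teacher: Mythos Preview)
The paper does not actually prove this theorem: it is quoted verbatim from the author's earlier work \cite{peron2024uniform} (note the sentence ``We quote from \cite[Th.~2.1]{peron2024uniform}'' immediately preceding the statement), and no argument is reproduced here. So there is no in-paper proof to compare against.

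That said, your argument is correct and self-contained. The energy identity you derive is exactly right, and splitting $\int_\Omega\bm|\HH|^2$ into its $\Omega_\pm$ contributions is precisely what extracts the sharp weight $\sqrt{\mu_r}$ on $\|\HH\|_{0,\Omega_-}$; the threshold $\mu_\star$ is indeed only needed to ensure $\mu_-\geqslant\mu_+$. One minor simplification on the existence side: since $\bs\geqslant\min(\sigma_\pm)>0$ throughout $\Omega$, the electric sesquilinear form is already complex-coercive on $\bH(\rot,\Omega)$ for each fixed $\mu_r$ (multiply by $e^{i\theta}$ with $\tan\theta>\omega\varepsilon_0/\min(\sigma_\pm)$ and take the real part), so Lax--Milgram delivers existence directly and the Weber/Fredholm machinery, while valid, is more than the problem requires. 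Your closing remark is apt: the coercivity constant obtained this way degenerates like $\mu_-^{-1}$, which is exactly why the uniform estimate must be read off the \emph{magnetic} energy rather than the electric one.
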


In that follows, we introduce for convenience the following small parameter in the Maxwell equations \eqref{MS} 
\begin{equation}
\label{epsilon} 
\eps=\dfrac1{\sqrt{\mu_{r}}} \,.
\end{equation}
Then  for $\mu_{r}\geqslant\mu_{\star}$,  we denote by $(\EE_{(\eps)},\HH_{(\eps)})$ the solution of the system \eqref{MS} --\eqref{PIbc}.

\subsection{Electric formulation}
\label{S2.2}

By a standard procedure we deduce from the Maxwell system \eqref{MS}-\eqref{PIbc} the following variational formulation for the electric field $\EE_{(\eps)}$. The variational space is $\bH(\rot,\Omega)$:
\begin{equation}
 \bH(\rot,\Omega) =\{\uu\in\bL^2(\Omega)\; |\ \rot\uu\in\bL^2(\Omega) \}\ , 
\end{equation}
and the variational problem writes
\\[1ex]
{\it Find $\EE_{(\eps)} \in \bH(\rot,\Omega)$ such that for all $\EE^{\prime} \in \bH(\rot,\Omega)$}
\begin{equation}
\int_{\Omega} \big( \frac1{\mur(\eps)}\rot\EE_{(\eps)} \cdot \rot\overline\EE^{\prime}- \kappa_{+}^2
\left(1+ i \frac{\bs}{\omega\varepsilon_{0}}\right)\EE_{(\eps)}\cdot\overline\EE^{\prime} \big) \dr\xx = i \omega \mu_{+}\int_{\Omega} \jj  \cdot\overline\EE^{\prime}\,\dr\xx\ ,
\label{FVE0}
\end{equation}
where we have set
\begin{equation}
\label{2Eeps}
  \mur(\eps)={\mathbf{1}_{\Omega\iso}}  
   + {\frac{1}{\eps^2}}\,{\mathbf{1}_{\Omega\con}} 
 \ ,    \quad 
 \quad\mbox{and}\quad    \kappa_{+}:=\omega\sqrt{\varepsilon_{0}\mu_{+}} \ .
\end{equation}	
\Bk

\section{Multiscale expansion for the electric field}
\label{S3}

 In that follows we assume that the right-hand side $\jj$ in \eqref{MS} is smooth, and its support does not meet the domain  $\Omega_-$. We assume  also that Assumption \ref{H2} holds: \Bk
\begin{hyp}
\label{H2}
We assume that the surfaces $\Sigma$ (interface) and $\Gamma$ (external boundary) are smooth.
\end{hyp}
Let $\cU_-$ be a tubular neighborhood of the surface $\Sigma$ in the conductor part $\Omega_-$, see Figure~\ref{Fig1}. We denote by $(y_{\alpha},y_{3})$ a local {\em normal coordinate system} to the surface $\Sigma$ in  $\cU_-$: Here, $y_\alpha$, $\alpha=1,2$, are tangential coordinates on $\Sigma$ and $y_3$ is the normal coordinate to $\Sigma$, cf.\ \cite{CDFP11}.
\input contribF4T.tex
\begin{figure}[ht]
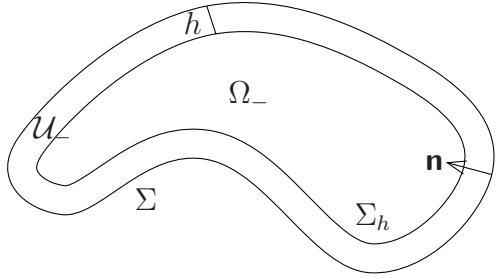

\begin{center}
\def\EpsTube{-10}
\figinit{1.1pt}
\pssetupdate{yes}
\figpt 8:(-48,0)
\figpt 22:(-92,15.5)\figpt 23:(-27.2,60.8)
\figpt 24:(46.3,41.5)\figpt 25:(70.8,2.8)
\figpt 26:(29,-31)\figpt 27:(-27.2,8)\figpt 28:(-77,-11)
\figpt 2:(-24,51)
\figpt 3:(50,50)
\figpt 16:(-70,20)\figpt 15:(0,30)
\figpt 17:(40,10) \figpt 18:(-60,-20)
\figpt 21:(50,65) \figpt 31:(30,-18)  \figpt 34:(-32,55) \figpt 36:(-200,30)
 \figpt 35:(-80,18) 
\figpt 19:(61,65) \figpt 20:(3,27)  \figpt 30:(42.6,33)  \figpt 32:(55,7) \figpt 33:(61.6,6.1) 
\psbeginfig{}
\pssetfillmode{no}\pssetgray{0}
\pscurve[22,23,24,25,26,27,28,22,23,24]
\figptscontrolcurve 40,\NbC[22,23,24,25,26,27,28,22,23,24]
\psEpsLayer \EpsTube,\NbC[40,41,42,43,44,45,46,47,48,49,50,51,52,53,54,55,56,57,58,59,60,61]
\psline[23,2]
\psarrow[25,32]
\pssetfillmode{yes}\pssetgray{0.5}
\psendfig
\figvisu{\figBoxA}{}
{
\figwritew 15: $\Omega\con$(6pt)
\figwritec [34]{$h$}
\figwrites 8: $\Sigma$ (2pt)
\figwriten 31: $\Sigma_{h}$ (2pt)
\figwritew 32: $\nn$(1pt)
\figwritec [35]{$\cU_{-}$}
\figsetmark{$\figBullet$}
}
\centerline{\box\figBoxA}
 \caption{A tubular neighbourhood of the surface $\Sigma$}
\label{Fig1}	
\end{center}
\end{figure}

Let $\sigma_{\pm}>0$, $\mu_{+}>0$. In that follows, in the framework of Section \ref{S2.1},  the electric field  $\EE_{(\eps)}$ is denoted by $\EE^{+}_{(\eps)}$ in the non-magnetic part $\Omega_{+}$, and by $\EE^{-}_{(\eps)}$ in the magnetic conducting part $\Omega_{-}$. Then both parts possess series expansions in powers of $\eps$: 
\begin{gather}
\label{6E4a}
   \EE^{+}_{(\eps)}(\xx)  \approx\sum_{j\geqslant0} \eps^j\EE^{+}_j (\xx)  
     \, ,\quad \xx\in\Omega_+ \, ,
\\
\label{6E4b}
 \EE^{-}_{(\eps)}(\xx) \approx\sum_{j\geqslant0} \eps^j\EE^{-}_j (\xx; \eps) 
 \, ,\quad\xx\in\Omega_- \, ,
  \\
  \label{6E4c}
   \quad\mbox{with}\quad   
   \EE^-_j(\xx;\eps) = \chi(y_3) \,\WW_j(y_\beta,\frac{y_3}{\eps})\, .
\end{gather}
In \eqref{6E4c}, the function $\yy\mapsto\chi(y_3)$ is a smooth cut-off with support in $\overline\cU_-$, and it is such that  $\chi= 1$ in a smaller tubular neighborhood of $\Sigma$. The vector fields 
$\WW_{j}: (y_\alpha,Y_3)\mapsto\WW_{j}(y_\alpha,Y_3)$ are \textit{profiles} defined on $\Sigma\times\R^+$: They are exponentially decreasing with respect to $Y_{3}$ and are smooth in all variables.

In this section we provide the first terms of a multiscale expansion for the electric field at large relative magnetic permeability, cf. Subsection \ref{S3.1}. Then we prove error estimates for this expansion, cf. Subsection \ref{Svalid-bis}.  For the sake of completeness we give elements of proofs for the multiscale expansion in Section \ref{AppA}.

\subsection{First terms of the multiscale expansion}
\label{S3.1}

In this section we provide the construction of the first profiles $\WW_{j}=(\cW_{j},\fke_{j})$ and of the first terms $\EE^+_j$. The normal component $\fke_0$ of the first profile in the magnetic conductor is zero:
\begin{equation*}
\fke_{0}=0\, .
\end{equation*}
Then, the first term of the electric field in the non-magnetic region solves Maxwell equations with perfectly insulating electric boundary conditions on $\partial\Omega_{+}= \Sigma \cup\Gamma$ (we recall $\kappa_{+}=\omega\sqrt{\varepsilon_{0}\mu_{+}}$, cf. 
\eqref{2Eeps}):
\begin{equation}
\label{E0p}
 \left\{ 
   \begin{array}{lll}
    \rot\rot  \EE^+_{0} - \kappa_{+}^2({1+\frac{i}{\delta_{+}^2}}) \EE^+_{0} =  i \omega\mu_{+}\jj   \quad&\mbox{in}\quad \Omega_{+}
\\[0.5ex]
  \EE^+_{0} \cdot\nn= 0 \quad\mbox{and}\quad\!  \rot \EE^+_{0}\times\nn = 0 \quad &\mbox{on}\quad \Sigma \cup\Gamma \ ,
   \end{array}
    \right.
\end{equation}
with $\delta_{+} =\sqrt{{\omega\varepsilon_0} /{\sigma_{+}}}$. Thus the trace $\bsE_0$ of $\EE^+_0$ on the interface $\Sigma$ is \emph{tangential}. 

The first profile in the ferromagnetic region is a tangential field which is exponential with a complex rate  $\lambda$:
\begin{equation}
\label{W0cd}
\cW_{0}(y_{\beta},Y_{3}) = \bsE_{0}(y_{\beta})\;\mathrm{e}^{-\lambda Y_{3}}   
\, .
\end{equation}
Here the scalar $\lambda$ is given by: 
\begin{equation}
\label{lambda}
\lambda=\kappa_{+} \sqrt[4]{1+ \frac1{\delta_{-}^{4}}}\,\mathrm{e}^{\di i\tfrac{\theta(\delta_{-})-\pi}{2}} \quad \mbox{with}\quad  \delta_{-} =\sqrt{{\omega\varepsilon_0} /{\sigma_{-}}}  \ ,\quad \mbox{and}\quad  \theta(\delta_{-})=\arctan\frac1{\delta_{-}^{2}} \, .
\end{equation} 
Note that $\Re \lambda>0$. Note also that, if $\bsE_0$ is not identically $0$, there exist  constants $c, C>0$ independent of $\eps$ such that
\begin{equation}
\label{Eestim}
   c\sqrt \eps \leqslant
   \|\EE^-_0(\,\cdot\,;\eps) \|_{0,\Omega_{-}}\leqslant C\sqrt \eps \ .
\end{equation}    

 The next term which is determined in the asymptotics is the normal component $\fke_1$ of the profile $\WW_1$:
\begin{equation}
\label{fke1a}
\fke_{1}(y_{\beta},Y_{3}) =\lambda^{-1}  \Div_{\Sigma}  \bsE_{0}(y_{\beta}) \;\mathrm{e}^{-\lambda Y_{3}}\,.
\end{equation}
Here the operator $\Div_{\Sigma}$ is the surface divergence operator on $\Sigma$. 
The next term in the  non-magnetic region solves:
\begin{equation}
\label{E1p}
 \left\{
   \begin{array}{lll}
    \rot\rot \EE^+_1 - \kappa_{+}^2({1+\frac{i}{\delta_{+}^2}}) \EE^+_{1}  = 0  \quad&\mbox{in}\quad \Omega_{+}
\\[0.5ex]
   \rot\EE^+_1\times\nn= - \lambda \bsE_0 \quad &\mbox{on}\quad \Sigma
\\[0.5ex]
   \EE^+_{1} \cdot\nn= 0 \quad\mbox{and}\quad\!  \rot \EE^+_{1}\times\nn = 0   
   \quad &\mbox{on}\quad \Gamma.
   \end{array}
    \right.
\end{equation}
 Like above, $\bsE_1$ is the trace of $\EE^+_1$ on the interface $\Sigma$.
Then the tangential components of the profile $\WW_{1}$ are given by the tangential field  $\cW_{1}$:
\begin{equation}
\label{W1cd}
  {\cW_{1}}(y_{\alpha},Y_{3}) =
   \Big[ \bsE_{1}+Y_{3} \big(\cH
   -\cC\big) \bsE_{0}\Big](y_{\alpha}) \; \mathrm{e}^{-\lambda Y_{3}}\, .
\end{equation}    
Here $\cH=\tfrac12\, b_{\alpha}^{\alpha}$ is the \textit{mean curvature} of the surface $\Sigma$\footnote{The sign of $\cH$ depends on the orientation of the surface $\Sigma$. As a convention, the unit normal vector $\nn$ on the surface $\Sigma$ is inwardly oriented to $\Omega_{-}$, see Figure~\ref{Fig1}. }, and $\cC$ is the curvature tensor field on  $\Sigma$ defined by 
\begin{equation}
\label{EcC}
(\cC  \bsE)_{\alpha}=b^{\beta}_{\alpha} \sE_{\beta} \, ,
\end{equation}
with $b_{\alpha}^{\beta}=a^{\beta\gamma}b_{\gamma\alpha}$, and $a^{\beta\gamma}$ is the inverse of the metric tensor $a_{\beta\gamma}$ in $\Sigma$, and $b_{\gamma\alpha}$ is the curvature tensor in $\Sigma$.
\Bk

 The next term in the non-magnetic region solves:
\begin{equation}
\label{E2p}
 \left\{
   \begin{array}{lll}
    \rot\rot \EE^+_2 - \kappa_{+}^2 ({1+\frac{i}{\delta_{+}^2}}) \EE^+_{2}  = 0  \quad&\mbox{in}\quad \Omega_{+}
\\[0.8ex]
   \rot\EE^+_2\times\nn=     -\lambda (\nn \times\bsE_{1})\times\nn + \left(\cH-\cC \right) \bsE_{0}
    \quad &\mbox{on}\quad \Sigma
\\[0.8ex]
   \EE^+_{2} \cdot\nn= 0 \quad\mbox{and}\quad\!  \rot \EE^+_{2}\times\nn = 0   
   \quad &\mbox{on}\quad \Gamma.
   \end{array}
    \right.
\end{equation}

\subsection{Validation of the multiscale expansion}
\label{Svalid-bis}

\newcommand  {\CC}{\boldsymbol{\mathsf C}}
\newcommand  {\jjw}{\widetilde{\boldsymbol{\mathsf\jmath}}}

The validation of the multiscale expansion \eqref{6E4a}-\eqref{6E4b} for the electric field $\EE_{(\eps)}$ 
consist in proving estimates for remainders $\RR_{m;\,\eps}$  which are defined as 
\begin{equation}
\label{6E5} 
  \RR_{m;\,\eps}   = \EE_{(\eps)} - \sum_{j=0}^m \eps^j\EE_j \quad\mbox{in}\quad\Omega\,.
\end{equation} 
This is done by an evaluation of the right hand side when the Maxwell operator is applied to $\RR_{m;\,\eps}$. By construction (cf. Sections \ref{AE1}, \ref{A2}, and \ref{A3}), we obtain (recall that $\mur(\eps)$ is defined by \eqref{2Eeps})
\begin{equation}
\label{6E6}
\left\{
   \begin{array}{lll}
   \rot \rot\RR^+_{m;\,\eps} -  \alpha_+ \kappa_{+}^2\RR^+_{m;\,\eps} &=\quad 0 
   \quad & \mbox{in}\quad \Omega_+
   \\[0.5ex]
\eps^{2}  \rot \rot\RR^-_{m;\,\eps} - \alpha_- \kappa_{+}^2\RR^-_{m;\,\eps} &=\quad \jj^-_{m;\,\eps} 
   \quad & \mbox{in}\quad \Omega_-
   \\[0.5ex]
   \big[\RR_{m;\,\eps}\times\nn\big]_\Sigma &=\quad 0 \quad  &\mbox{on}\quad \Sigma
   \\[0.5ex]
   \big[\mur(\eps)^{-1}\rot\RR_{m;\,\eps}\times\nn\big]_\Sigma &=\quad \g_{m;\,\eps}  
   \quad  &\mbox{on}\quad \Sigma
   \\[0.5ex]
   \RR^+_{m;\,\eps}\cdot\nn=0  \, \, \mbox{  and  }  \rot\RR^+_{m;\,\eps}\times\nn &=\quad 0 \quad &\mbox{on}\quad \partial\Omega\,.
   \end{array}
    \right.
\end{equation}
Here, $\underline{\alpha}=(\alpha_{+}, \alpha_{-})$, $\alpha_+=1+i/\delta_{+}^2$ and $\alpha_-=1+i/\delta_{-}^2$, and $[\uu\times\nn]_\Sigma$ denotes the jump of $\uu\times\nn$ across $\Sigma$. The right hand sides (residues) $\jj^-_{m;\,\eps}$ and $\g_{m;\,\eps}$ are, roughly, of the order $\eps^{m}$ : 
for all $m\in\N$, we have $\jj^-_{m;\,\eps}=\mathcal{O}({\eps^{m+1}})$ and $\g_{m;\, \eps}=\mathcal{O}({\eps^{m}})$, and we have  the following estimates 
\begin{equation}
\label{6E7}
   \|\jj^-_{m;\,\eps}\|_{0,\Omega_-}   \leqslant C_m\eps^{m+1}     \quad \mbox{and} \quad 
   \|\g_{m;\,\eps}\|_{\frac12,\Sigma}   
\leqslant
   C_m\eps^{m},
\end{equation}
where $C_m>0$ is independent of $\eps$. The main result of this section is the following.

\begin{thm}
\label{6T1}
In the framework above,  there exists $\eps_{1}>0$ such that  for all $m\in\N$ and $\eps\in(0,\eps_1]$, the remainder $\RR_{m;\,\eps}$ \eqref{6E5} satisfies the optimal estimate
\begin{equation}
\label{6E8}
   \|\RR^+_{m;\,\eps}\|_{0,\Omega_+} + \|\rot\RR^+_{m;\,\eps}\|_{0,\Omega_+} + 
   \eps^{-\frac12} \|\RR^-_{m;\,\eps}\|_{0,\Omega_-} + 
    \eps\|\rot\RR^-_{m;\,\eps}\|_{0,\Omega_-} \leqslant C_m\eps^{m+1}.
   \!\!\!\!
\end{equation}
\end{thm}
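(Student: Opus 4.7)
My plan is to apply the uniform a priori estimate of Theorem \ref{2T0} to a truncation of the expansion two orders higher than needed, then recover the bound on $\RR_{m;\,\eps}$ by a triangle inequality. Two obstacles must be overcome: the surface residue $\g_{m;\,\eps}$ is of size $\eps^m$, one order too large for the target rate $\eps^{m+1}$; and Theorem \ref{2T0} accepts only $\bL^2$ bulk sources with homogeneous jump conditions.

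\textbf{Step 1 (raise the truncation order).} Extending the constructive scheme of Section \ref{AppA}, I would build two additional terms $\EE_{m+1},\EE_{m+2}$, producing improved residues $\|\jj^-_{m+2;\,\eps}\|_{0,\Omega_-}=O(\eps^{m+3})$ and $\|\g_{m+2;\,\eps}\|_{\frac12,\Sigma}=O(\eps^{m+2})$. Writing
\[
\RR_{m;\,\eps} \;=\; \RR_{m+2;\,\eps} \;+\; \eps^{m+1}\EE_{m+1} \;+\; \eps^{m+2}\EE_{m+2},
\]
the proof of \eqref{6E8} for $\RR_{m;\,\eps}$ reduces to the matching estimate for $\RR_{m+2;\,\eps}$ at improved rate $\eps^{m+2}$, because each additional term $\eps^j\EE_j$ ($j=m+1,m+2$) contributes $O(\eps^{m+1})$ to each norm on the left of \eqref{6E8}. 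This reduction uses the profile scaling $\|\EE^-_j(\,\cdot\,;\eps)\|_{0,\Omega_-}=O(\sqrt\eps)$ of \eqref{Eestim} and its analogue $\|\rot\EE^-_j(\,\cdot\,;\eps)\|_{0,\Omega_-}=O(\eps^{-1/2})$ coming from the stretched normal derivative in \eqref{6E4c}.

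\textbf{Step 2 (lift the surface residue and apply Theorem \ref{2T0}).} Next, I would lift $\g_{m+2;\,\eps}$ by a field $\LL_{m+2;\,\eps}\in\bH(\rot,\Omega)$ supported in a one-sided tubular neighborhood of $\Sigma$ on the $\Omega_+$ side, so that $[\mur(\eps)^{-1}\rot\LL_{m+2;\,\eps}\times\nn]_\Sigma=\g_{m+2;\,\eps}$. Since $\LL_{m+2;\,\eps}\equiv 0$ in $\Omega_-$ and $\mur(\eps)=1$ in $\Omega_+$, this reduces to the Neumann-type condition $\rot\LL_{m+2;\,\eps}\times\nn = \g_{m+2;\,\eps}$ on the $\Omega_+$ side of $\Sigma$; the smoothness of $\Sigma$ and $\g_{m+2;\,\eps}$ (Assumption \ref{H2}) allows a classical surface-to-volume extension with all norms of $\LL_{m+2;\,\eps}$ of order $\eps^{m+2}$, and sufficient regularity for $\rot\rot\LL_{m+2;\,\eps}$ to lie in $\bL^2(\Omega)$. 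The corrected remainder $\tilde\RR := \RR_{m+2;\,\eps}-\LL_{m+2;\,\eps}$ then solves \eqref{FVE0} with homogeneous jumps across $\Sigma$ and an $\bL^2$ bulk source $\jj^\ast$ of norm $O(\eps^{m+2})$. The pair $(\tilde\RR,\tilde\HH)$ with $\tilde\HH=(i\omega\bm)^{-1}\rot\tilde\RR$ is a Maxwell solution in the sense of \eqref{MS}--\eqref{PIbc} for source $\jj^\ast$, so Theorem \ref{2T0} yields
\[
\|\tilde\HH\|_{0,\Omega}+\|\tilde\RR\|_{0,\Omega}+\sqrt{\mu_r}\,\|\tilde\HH\|_{0,\Omega_-}\;\leqslant\; C\,\|\jj^\ast\|_{0,\Omega}\;=\;O(\eps^{m+2}).
\]
Multiplying the magnetic bounds by $\omega\mu_+$ in $\Omega_+$ and by $\omega\mu_+/\eps^2$ in $\Omega_-$ converts them to curl bounds on $\tilde\RR$, producing in particular the weighting $\eps$ in front of $\|\rot\tilde\RR\|_{0,\Omega_-}$. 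Adding back $\LL_{m+2;\,\eps}$ (which is $O(\eps^{m+2})$ in all relevant norms) and combining with Step 1 gives \eqref{6E8}.

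\textbf{Main obstacle.} The delicate point is Step 2: realizing $\LL_{m+2;\,\eps}$ with simultaneously the correct $\eps^{m+2}$ scaling, the prescribed surface jump, and enough regularity so that $\rot\rot\LL_{m+2;\,\eps}\in\bL^2(\Omega)$. The one-sided-support trick circumvents the $\eps^{-2}$ jump of $\mur(\eps)$ but still requires a careful surface-to-volume extension compatible with the asymmetric weights in \eqref{6E8}. A cleaner alternative would recast Theorem \ref{2T0} in dual-operator form (an inf-sup bound on $\bH(\rot,\Omega)$) and feed $\g_{m+2;\,\eps}$ directly as an anti-linear form on the variational space, replacing the explicit lift by a short complementary argument.
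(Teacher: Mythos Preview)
Your proposal is correct and follows essentially the same route as the paper's proof: a one-sided lift supported in $\Omega_+$ (your $\LL_{m+2;\,\eps}$ is the paper's corrector $\CC_{m;\,\eps}$, built there by choosing the normal component zero and lifting the first two traces in $\bH^2(\Omega_+)$) to absorb the surface residue $\g$, then application of Theorem~\ref{2T0} to the corrected remainder, and finally the two-order bootstrap via $\RR_{m;\,\eps}=\RR_{m+2;\,\eps}+\eps^{m+1}\EE_{m+1}+\eps^{m+2}\EE_{m+2}$ together with the profile estimates. The only difference is cosmetic ordering: the paper first proves the suboptimal $O(\eps^m)$ bound for generic $m$ (its Steps~1--2) and then specializes to $m+2$ (its Step~3), whereas you raise the order first and lift afterwards.
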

\begin{proof}
Our proof is based on uniform estimates for the electric and magnetic fields  $(\EE_{(\eps)},  \HH_{(\eps)})$ \eqref{3E1},  cf. Theorem \ref{2T0}.

\noindent{\sc Step 1.}  Since $\g_{m;\,\eps} \neq 0$, according to \eqref{6E6} the vector field $\mur(\eps)^{-1}\rot\RR_{m;\,\eps}$ does not define an element of the Hilbert space $\bH(\rot,\Omega)$:
\begin{equation*}
\bH(\rot,\Omega) =\{\uu\in\bL^2(\Omega)\; |\ \rot\uu\in\bL^2(\Omega) \}\ .
\end{equation*}
Hence  we can not apply Theorem \ref{2T0}  directly to the remainder $\RR_{m;\,\eps}$  because the vector field $\rot\mur(\eps)^{-1}\rot\RR_{m;\,\eps} -  \underline{\alpha} \kappa_{+}^2\RR_{m;\,\eps}$ does not define an element of $\bL^2(\Omega)$. 
To overcome this difficulty  we are going to introduce a corrector $\CC_{m;\,\eps}$ satisfying suitable estimates and so that
\begin{equation}
\label{6E6C}
\left\{
   \begin{array}{lll}
   \big[(\RR_{m;\,\eps} - \CC_{m;\,\eps})\times\nn\big]_\Sigma 
   &=\quad 0 \quad  &\mbox{on}\quad \Sigma
   \\[0.6ex]
   \big[\mur(\eps)^{-1}\rot(\RR_{m;\,\eps}-\CC_{m;\,\eps})\times\nn\big]_\Sigma 
   &=\quad 0   \quad  &\mbox{on}\quad \Sigma
    \\[0.6ex]
       (\RR_{m;\,\eps}-\CC_{m;\,\eps})\cdot\nn  =0  \, \, \mbox{  and  }  \rot (\RR_{m;\,\eps}-\CC_{m;\,\eps}) \times\nn  &=\quad 0   \quad  &\mbox{on}\quad \partial\Omega\ .
   \end{array}
    \right.
\end{equation}

\noindent
Construction of $\CC_{m;\,\eps}$: We take $\CC_{m;\,\eps}=0$ in $\Omega_-$ and use a trace lifting to define $\CC_{m;\,\eps}$ in $\Omega_+$. According to \eqref{6E6}, and since $\mur(\eps)=1$ in $\Omega_+ $, it suffices that
\begin{equation}
\label{6E7C}
\left\{
   \begin{array}{lll}
   \CC^+_{m;\,\eps} \times\nn 
   &=\quad 0 \quad  &\mbox{on}\quad \Sigma
   \\[0.5ex]
 \rot\CC^+_{m;\,\eps} \times\nn 
   &=\quad \g_{m;\,\eps}   \quad  &\mbox{on}\quad \Sigma
     \\[0.5ex]
   \CC^+_{m;\,\eps} \cdot\nn   =0  \, \, \mbox{  and  }  \rot \CC^+_{m;\,\eps} \times\nn  &=\quad  0   \quad  &\mbox{on}\quad \partial\Omega \ .
   \end{array}
    \right.
\end{equation}
Then, denoting by $C_\beta$ and $C_3$ the tangential and normal components of $\CC^+_{m;\,\eps}$ associated with a system of normal coordinates $\yy=(y_\beta,y_3)$, and by $g_\beta$ the components of $\g_{m;\,\eps}$ the above system becomes 
\begin{equation}
\label{6E8C}
\left\{
   \begin{array}{lll}
   C_\beta 
   &=\quad 0 \quad  &\mbox{on}\quad \Sigma
   \\[0.5ex]
  \partial_3 C_\beta - \partial_\beta C_3 
   &=\quad g_\beta   \quad  &\mbox{on}\quad \Sigma
    \\[0.5ex]
C_3 =0     \, \, \mbox{  and  }    \partial_3 C_\beta - \partial_\beta C_3 =0 \quad  &\mbox{on}\quad \partial\Omega \ .
   \end{array}
    \right.
\end{equation}
Hence the problem \eqref{6E8C} can be solved in $\bH^2(\Omega_+)$ choosing $C_3=0$ and a standard lifting of the first two traces on $\Sigma$ and $\partial\Omega$ with the estimate
\begin{equation}
\label{6E9C}
   \|\CC^+_{m;\,\eps}\|_{2,\Omega_+} \leqslant C\|\g_{m;\,\eps}\|_{\frac12,\Sigma} \ .
\end{equation}

We deduce from the estimate \eqref{6E7}, and \eqref{6E9C}
\begin{equation}
\label{6E11}
   \|\CC^+_{m;\,\eps}\|_{2,\Omega_+}
 \leqslant 
   C\eps^{m} ,
\end{equation}
where $C$ may depend on $m$.
We set
\begin{equation}
\label{6E12}
   \widetilde\RR_{m;\,\eps} := \RR_{m;\,\eps} - \CC_{m;\,\eps} 
   \quad\mbox{and}\quad
  \jjw_{m;\,\eps} := 
  \rot\mur(\eps)^{-1}\rot\widetilde\RR_{m;\,\eps} - \kappa_{+}^2 \underline{\alpha}\widetilde\RR_{m;\,\eps} \ .
\end{equation}
Hence by construction, we have $\jjw_{m;\,\eps}\in\bL^{2}(\Omega)$ with the estimates
\begin{equation}
\label{6E13}
   \|\jjw_{m;\,\eps}\|_{0,\Omega}  \leqslant 
   C\eps^{m} \ .
\end{equation}

\noindent{\sc Step 2.}
We can apply Theorem \ref{2T0}
  to the couple $(\EE,\HH) = (\widetilde\RR_{m;\,\eps},(i\omega\bm)^{-1}\rot\widetilde\RR_{m;\,\eps})$ and, thanks to \eqref{6E12}, we obtain 
\begin{equation*}
   \|\widetilde\RR_{m;\,\eps}\|_{0,\Omega} +
    \|\bm^{-1}\rot\widetilde\RR_{m;\,\eps}\|_{0,\Omega}
   \leqslant C \|\jjw_{m;\,\eps}\|_{0,\Omega}\,.
\end{equation*}
Combining this estimate with \eqref{6E11} and \eqref{6E13}, we deduce
\begin{equation}
\label{6E14}
   \|\RR_{m;\,\eps}\|_{0,\Omega} + \|\bm^{-1}\rot\RR_{m;\,\eps}\|_{0,\Omega}
   \leqslant  C\eps^{m},
\end{equation}
where $C$ may depend on $m$.

\noindent{\sc Step 3.}
In order to have an optimal estimate for $\RR_{m;\,\eps}$, we use \eqref{6E14} for $m+2$. We obtain (recall that $\bm=\mu_{+}$ in $\Omega_{+}$,   $\bm=\mu_{-}$ in $\Omega_{-}$, and $\mu_{-}=\eps^{-2}\mu_{+}$)
\begin{equation}
\label{6E15}
   \|\RR_{m+2;\,\eps}\|_{0,\Omega} +  \frac{1}{\mu_{+}} \|\rot\RR_{m+2;\,\eps}^{+}\|_{0,\Omega_{+}} + \frac{\eps^{2}}{\mu_{+}} \|\rot\RR_{m+2;\,\eps}^{-}\|_{0,\Omega_{-}} 
   \leqslant  C\eps^{m+2} \, .
\end{equation}
But we have the formula
\begin{equation}
\label{6E16}
   \RR_{m;\,\eps} = \sum_{j=m+1}^{m+2} \eps^j \EE_j + \RR_{m+2;\,\eps} \, ,
\end{equation}
and using  \eqref{EAb}, we have also for any $j\in\N$
\begin{equation}
\label{6E17}
   \|\EE^+_j\|_{\bH(\rot,\Omega_+)} + 
   \eps^{-\frac12} \|\EE^-_j\|_{0,\Omega_-} + 
   \eps^{\frac12} \|\rot\EE^-_j\|_{0,\Omega_-} \leqslant C.
\end{equation}
We finally deduce the estimate \eqref{6E8} from \eqref{6E15} to \eqref{6E17}.
\end{proof}

\subsection{Alternative proof of error estimates in the non-magnetic part $\Omega_{+}$}
\label{Svalid}

In the previous subsection, we validated the asymptotic expansion \eqref{6E4a}-\eqref{6E4b} for the electric field $\EE_{(\eps)}$ 
by proving estimates for remainders $\RR_{m;\,\eps}^{\EE}$ which are defined as  (cf. \eqref {6E5})
\begin{equation}
\label{E6E5}
   \RR_{m;\,\eps}^{\EE} = \EE_{(\eps)} - \sum_{j=0}^m \eps^j\EE_j \quad\mbox{in}\quad\Omega\, .
\end{equation}
In this subsection, we prove alternatively error estimates in the non-magnetic part $\Omega_{+}$ by using error estimates for the magnetic field \cite[Th. 4.2]{peron2024uniform}.

Using Maxwell equations \eqref{MS}, we have 
\begin{equation}
\label{EARH+}
 \RR_{m;\,\eps}^{\EE,+}= (\sigma_{+} -i \omega\varepsilon_0 )^{-1} \rot \RR_{m;\,\eps}^{\HH,+}  \quad \mbox{and} \quad \rot\RR_{m;\,\eps}^{\EE,+}=i \omega\mu_{+}\RR_{m;\,\eps}^{\HH,+} \quad\mbox{in}\quad\Omega_{+} \, .
\end{equation}
Here $\RR_{m;\,\eps}^{\HH}$ are remainders which are defined from  the asymptotic  expansion \eqref{6E4a}-\eqref{6E4b} \Bk for the magnetic field $\HH_{(\eps)}$ 
\begin{equation}
\label{E6E6}
   \RR_{m;\,\eps}^{\HH} = \HH_{(\eps)} - \sum_{j=0}^m \eps^j\HH_j \quad\mbox{in}\quad\Omega\,.
\end{equation}
Then, Theorem 4.2 in Ref. \cite{peron2024uniform} yields that there exists $\eps_{1}>0$ such that  for all $m\in\N$, and $\eps\in(0,\eps_{1})$, we have the following optimal estimate 
\begin{multline}
\label{6E8}
   \|\RR^{\HH,+}_{m;\,\eps}\|_{0,\Omega_+} + \|\rot\RR^{\HH,+}_{m;\,\eps}\|_{0,\Omega_+} + 
   \eps^{-\frac12} \|\RR^{\HH,-}_{m;\,\eps}\|_{0,\Omega_-} + 
   \eps^{\frac12} \|\rot\RR^{\HH,-}_{m;\,\eps}\|_{0,\Omega_-} \leqslant C_m\eps^{m+1}.
   \!\!\!\!
\end{multline}
Hence, according to \eqref{EARH+}, we retrieve estimates for remainders $\RR_{m;\,\eps}^{\EE}$ in the non-magnetic part $\Omega_{+}$ 
\begin{equation*}
   \|\RR_{m;\,\eps}^{\EE,+} \|_{0,\Omega_+} + \|\rot\RR_{m;\,\eps}^{\EE,+} \|_{0,\Omega_+}  \leqslant c_m\eps^{m+1}, 
\end{equation*}
compare with Eq. \eqref{6E8} in Theorem \ref{6T1}.

\begin{rem} 
The presence of profiles in the multiscale expansions for the electric field and for the magnetic field  in the magnetic part  $\Omega_{-}$ prevents to link the remainders $\RR_{m;\,\eps}^{\EE,-}$ and $\RR_{m;\,\eps}^{\HH,-}$ via the Maxwell equations in a similar way as \eqref{EARH+}.  Nevertheless, we have the following estimates for remainders $\RR_{m;\,\eps}^{\EE,-}$ (cf. Eq. \eqref{6E8} in Theorem \ref{6T1}) :
\begin{equation*} 
   \eps^{-\frac12} \|\RR_{m;\,\eps}^{\EE,-} \|_{0,\Omega_-} + 
\eps    \|\rot\RR_{m;\, \eps}^{\EE,-} \|_{0,\Omega_-} \leqslant C'_m  \eps^{m+1}.
\end{equation*}
\end{rem}

\section{A measure of the magnetic skin effect at large relative permeability}
\label{S3.2}

In this section  we introduce a suitable skin depth function to measure the boundary layer phenomenon,  cf. Section \ref{S4.1}.  As an application of the multiscale expansion given in Section \ref{S3}  for the electric field, we obtain the asymptotic behavior of this function at large relative permeability. Then, we compare this result with the literature on this topic \cite{DFP10,CDFP11,PePo21},  cf. Section \ref{S4.2}.

\subsection{Definition and asymptotic behavior of a characteristic length}
\label{S4.1}
For a data $\jj$, let us define  
$$
\WW_{(\eps)}(y_\alpha,y_3) := \EE^-_{(\eps)}(\xx),\quad 
y_\alpha\in\Sigma,\quad 0\leqslant y_3<h_0\ ,
$$
for $h_{0}$ small enough. Hereafter for any  $1$-form field $\WW=(\cW,\fke)$, $|\WW |$ denotes the modulus of $\WW$ which is defined from the following identity
\begin{equation*}
 |\WW |^{2}=   a^{\alpha\beta}(y_{3}){\cW_{\alpha}}\overline{\cW_{\beta}}+ |\fke|^{2}, 
\end{equation*}
 where $a^{\alpha\beta}(h)$ is the inverse of the metric tensor of the manifold $\Sigma_{h}$ (cf. \eqref{Einvmet}, Sec. \ref{AppA}),  and $\langle\cdot\,,\cdot\rangle$ denotes the corresponding scalar product. 

\subsubsection*{Definition of a characteristic length}
\begin{defn}
\label{DefL}
Let $\Sigma$ be a smooth surface, and $\jj$ a data of problem \eqref{MS} such that for all $y_{\alpha}$ in $\Sigma$, $\WW_{(\eps)}(y_{\alpha},0) \neq 0$. The skin depth is the length $\cL(\mu_{-},\sigma_{-},y_{\alpha})$ defined on $\Sigma$ and taking the smallest positive value such that
\begin{equation}
\label{epaisspeau}
   |\WW_{(\eps)}\big(y_{\alpha},\cL(\mu_{-},\sigma_{-},y_{\alpha})\big) |=
   |\WW_{(\eps)}(y_{\alpha},0) | \ {\mathrm{e}}^{-1}\,.
\end{equation}
\end{defn}

Thus the length $\cL(\mu_{-},\sigma_{-},y_{\alpha})$ corresponds to  a distance from the interface where the modulus $| \WW_{\eps} |$ has decreased  by a factor $\mathrm{e}$. This length depends in particular on the magnetic permeability $\mu_{-}$, on the conductivity $\sigma_{-}$, and on each point $y_{\alpha}$ in the interface $\Sigma$. It depends also  on the data $\jj$.

\subsubsection*{Asymptotic behavior of the characteristic length function when $\mu_{r}\to \infty$}

To measure the magnetic skin effect with the characteristic function, we introduce the classical {\em skin depth} parameter which is given by 
\begin{equation}
 \label{El}
 \ell(\mu_{-},\sigma_{-})=\sqrt{\frac{2}{\omega\mu_{-}\sigma_{-}}}\ ,
\end{equation}
and we introduce a scalar function $\phi$  defined  as 
\begin{equation}
\label{Ephi}
\phi(\delta)=\dfrac{1}{\sqrt{2}} \dfrac1{\sqrt[4]{1+ \delta^{4}}}\,\left(\sin\left(\frac12 \arctan\frac1{\delta^{2}}\right) \right)^{-1} \, ,
\end{equation}
for any $\delta>0$. Note  that we have in particular:   $\phi(\delta)\to 1$ as $\delta\to0$ and  $\phi(\delta)\sim \sqrt2 \delta$ as $\delta\to\infty$,  see also Figure~\ref{phi} for the graph of function $\phi$.

As a consequence of \eqref{W0cd} and \eqref{W1cd}, we obtain 
\begin{equation}
\label{EW}
\left\{ \begin{array}{l}
   |\WW_{(\eps)}(y_{\alpha},y_{3}) |^2 = 
   |\bsE_{0}(y_{\alpha}) |^2 \ \gm(y_{\alpha},y_3;\eps) \ 
   \mathrm{e}^{-2 y_{3} \Re(\lambda) / \eps} \ , 
   \quad\mbox{with}\\[1ex]
   \gm(y_{\alpha},y_3;\eps) :=  \\ \displaystyle \qquad\qquad
   1 + 2 y_{3} \cH(y_{\alpha}) +
   2 \eps \ \frac{\Re \big\langle  \bsE_{0}(y_{\alpha}) , \bsE_{1}(y_{\alpha}) \big\rangle}
   { | \bsE_{0}(y_{\alpha}) |^{2}} 
   + \cO\big((\eps+y_{3})^2\big)
    \, . 
\end{array}\right.
\end{equation}
We prove this formula \eqref{EW} in Section \ref{S4.3} within the proof of the next result, Theorem \ref{T1}. 
 Relying on this formula, one can exhibit the asymptotic behavior   at large relative permeability  of the skin depth $\cL(\mu_{-},\sigma_{-},y_{\alpha})$ that depends in particular both on the product $\ell(\mu_{-},\sigma_{-})\phi(\sqrt{{\omega\varepsilon_0} /{\sigma_{-}}})$, and  on  the mean curvature $\cH$ of the surface $\Sigma$ (the boundary of the domain $\Omega_{-}$): 
\begin{thm}
\label{T1}
Recall that $\ell(\mu_{-},\sigma_{-})$ is defined by \eqref{El}, $\delta_{-} =\sqrt{{\omega\varepsilon_0} /{\sigma_{-}}}$,  and $\phi$  is the  function  defined by \eqref{Ephi}. Let $\mu_{+}>0$. 
Let $\Sigma$ be a regular surface with mean curvature $\cH$. 
We assume that $\bsE_{0}(y_{\alpha}) \neq 0$. Then the skin depth $\cL(\mu_{-},\sigma_{-},y_{\alpha})$ has the following behavior for large relative permeability:
\begin{equation}
\label{daepaisspeau}
   \cL(\mu_{-},\sigma_{-},y_{\alpha})=
   \ell(\mu_{-},\sigma_{-}) \phi(\delta_{-}) \Big(1+\cH(y_{\alpha})\, \ell(\mu_{-},\sigma_{-})  \phi(\delta_{-}) + \cO(\mu_{r}^{-1})\Big),
   \quad
   \mu_{r}\to \infty\, .
\end{equation}
\end{thm}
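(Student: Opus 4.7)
The proof naturally splits into two steps: first, derive the pointwise asymptotic formula \eqref{EW} for $|\WW_{(\eps)}|^2$ inside the boundary layer; second, invert the defining relation \eqref{epaisspeau} to extract the asymptotic behavior of $\cL$.

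\textbf{Step 1: derivation of \eqref{EW}.} I would substitute the first two profiles $\WW_0=(\bsE_0\,e^{-\lambda Y_3},0)$ from \eqref{W0cd} and $\WW_1=(\cW_1,\fke_1)$ from \eqref{W1cd}--\eqref{fke1a} into
$$
|\WW_{(\eps)}|^2 \,=\, a^{\alpha\beta}(y_3)\,(\cW_0+\eps\cW_1)_\alpha\overline{(\cW_0+\eps\cW_1)_\beta} \,+\, |\eps\fke_1|^2 \,+\, \cO\bigl((\eps+y_3)^2\bigr),
$$
the remainder being controlled through Theorem~\ref{6T1} together with uniform pointwise bounds on the smooth, exponentially decaying higher profiles. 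Expanding the shifted metric as $a^{\alpha\beta}(y_3)=a^{\alpha\beta}+2y_3\,b^{\alpha\beta}+\cO(y_3^2)$, the key observation is a cancellation at order $y_3$: the metric correction contributes $2y_3\,b^{\alpha\beta}\bsE_{0,\alpha}\overline{\bsE_{0,\beta}}$, which exactly cancels the contribution of the $-Y_3\,\cC\bsE_0$ piece of $\cW_1$ (after using $\eps Y_3=y_3$ and unfolding the definition \eqref{EcC}), leaving only $2y_3\,\cH\,|\bsE_0|^2$ from the $Y_3\,\cH\bsE_0$ piece. Combined with $|e^{-\lambda Y_3}|^2=e^{-2y_3\Re(\lambda)/\eps}$ and the $\eps$-cross-term $2\eps\,\Re\langle\bsE_0,\bsE_1\rangle$, this yields exactly \eqref{EW}.

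\textbf{Step 2: asymptotic inversion.} Setting $y_3=\cL$ in \eqref{EW} and dividing by the value at $y_3=0$, the relation \eqref{epaisspeau} becomes
$$
\frac{\gm(y_\alpha,\cL;\eps)}{\gm(y_\alpha,0;\eps)}\,=\,e^{-2+2\cL\Re(\lambda)/\eps}.
$$
I would first establish the algebraic identity $\eps/\Re(\lambda)=\ell(\mu_-,\sigma_-)\,\phi(\delta_-)$. This follows from \eqref{lambda} via the identity $\cos((\theta-\pi)/2)=\sin(\theta/2)$, the relation $(1+1/\delta_-^4)^{1/4}=(1+\delta_-^4)^{1/4}/\delta_-$, and the elementary algebra $\sqrt{\omega\varepsilon_0}/\delta_-=\sqrt{\omega\sigma_-}$ combined with $\eps=\sqrt{\mu_+/\mu_-}$. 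This identifies the leading skin depth $\cL_0:=\eps/\Re(\lambda)=\ell\phi$. For the first correction, writing $\cL=\cL_0(1+u)$ reduces the exponent on the right-hand side to $2u$; expanding the left-hand side as $\gm(\cL)/\gm(0)=1+2\cL\,\cH(y_\alpha)+\cO((\eps+\cL)^2)$ (the $\eps$-terms cancel in the ratio) and matching with $e^{2u}=1+2u+\cO(u^2)$, one obtains $u=\cH(y_\alpha)\,\cL_0+\cO(\mu_r^{-1})$. Since $\cL_0^2=\cO(\mu_r^{-1})$, this produces \eqref{daepaisspeau}.

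\textbf{Main obstacle.} The delicate point is the exact cancellation of Step~1: the emergence of the scalar $2\cH$ as the coefficient of $y_3$ in $\gm$ is nontrivial, since $b^{\alpha\beta}$ is not in general a multiple of $a^{\alpha\beta}$, so the $\cH$ term only appears after carefully combining the intrinsic first-order expansion of the metric on parallel surfaces with the curvature-tensor contribution of the profile $\cW_1$ prescribed in \eqref{W1cd}. One must also verify that $|\fke_1|^2$ and the neglected profile $\WW_2$ contribute only to the $\cO((\eps+y_3)^2)$ remainder, and check in Step~2 that the hypothesis $\bsE_0(y_\alpha)\neq0$ combined with $\Re(\lambda)>0$ ensures existence and uniqueness of the solution $\cL$ of \eqref{epaisspeau} in a neighborhood of $\cL_0$ for all $\eps$ sufficiently small.
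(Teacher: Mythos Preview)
Your proposal is correct and follows essentially the same route as the paper's proof: both arguments substitute the explicit profiles \eqref{W0cd}--\eqref{W1cd} and the metric expansion $a^{\alpha\beta}(y_3)=a^{\alpha\beta}+2y_3b^{\alpha\beta}+\cO(y_3^2)$ to derive \eqref{EW}, identify the same cancellation between the $b^{\alpha\beta}$ metric correction and the $-\cC\bsE_0$ piece of $\cW_1$, and then invert the defining relation \eqref{epaisspeau} via the identity $\eps/\Re(\lambda)=\ell(\mu_-,\sigma_-)\phi(\delta_-)$ and a Taylor expansion. Your ansatz $\cL=\cL_0(1+u)$ and your remarks on the $|\fke_1|^2$ term and on existence of $\cL$ near $\cL_0$ are welcome clarifications that the paper leaves implicit.
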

The proof of this result is postponed to Section \ref{S4.3}.
\begin{rem}
The higher order terms $\cO(\mu_{r}^{-1})$ in Eq. \eqref{daepaisspeau} do depend on the data $\jj$ of problem \eqref{MS}.
\end{rem}

\subsection{Comparisons with the literature}
\label{S4.2}

\subsubsection*{Comparison with the eddy-current problem}

A preliminary study  was done in \cite{PePo21} for the magnetic potential in a 2D eddy-current problem. For this interface problem where $\Sigma$ is a smooth and closed curve,  the skin depth has the following asymptotic behavior for large relative permeability, cf. \ \cite[Prop.\,2.10]{PePo21}:
 \begin{equation}
\label{daepaisspeau2D}
   \cL(\mu_{-},\sigma_{-}; \xi)\sim
\ell(\mu_{-},\sigma_{-})\Big(1+\frac{\kappa(\xi)}{2}\, \ell(\mu_{-},\sigma_{-})    \Big),
   \quad
   \mu_{r}\to \infty\, ,
\end{equation}
where $\kappa(\xi)$ is a scalar curvature at the point $\XX(\xi)\in\Sigma$ and $\XX(\xi)$ denotes an arc-length parameterization of $\Sigma$.  
We retrieve this equivalent fonction $\ell(\mu_{-},\sigma_{-})\Big(1+\frac{\kappa(\xi)}{2}\, \ell(\mu_{-},\sigma_{-})\Big)$  defined on $\Sigma$  from formula \eqref{daepaisspeau} only in the limit case $\delta_{-}\to 0$ since then $\phi(\delta_{-})\to 1$ (see also Figure~\ref{phi}), and  by replacing the mean curvature $\cH(y_{\alpha})$  by $\frac{\kappa(\xi)}{2}$.  
\Bk 

\subsubsection*{Comparison with the asymptotic behavior of the skin depth at  high conductivity in non-magnetic materials}

Since we have both $\delta_{-}\to 0$ when $\sigma_{-} \to \infty$, and  $\phi(\delta)\to 1$  as  $\delta\to 0$,
 \begin{figure}[ht]
\begin{center}
\includegraphics[keepaspectratio=true,width=10cm]{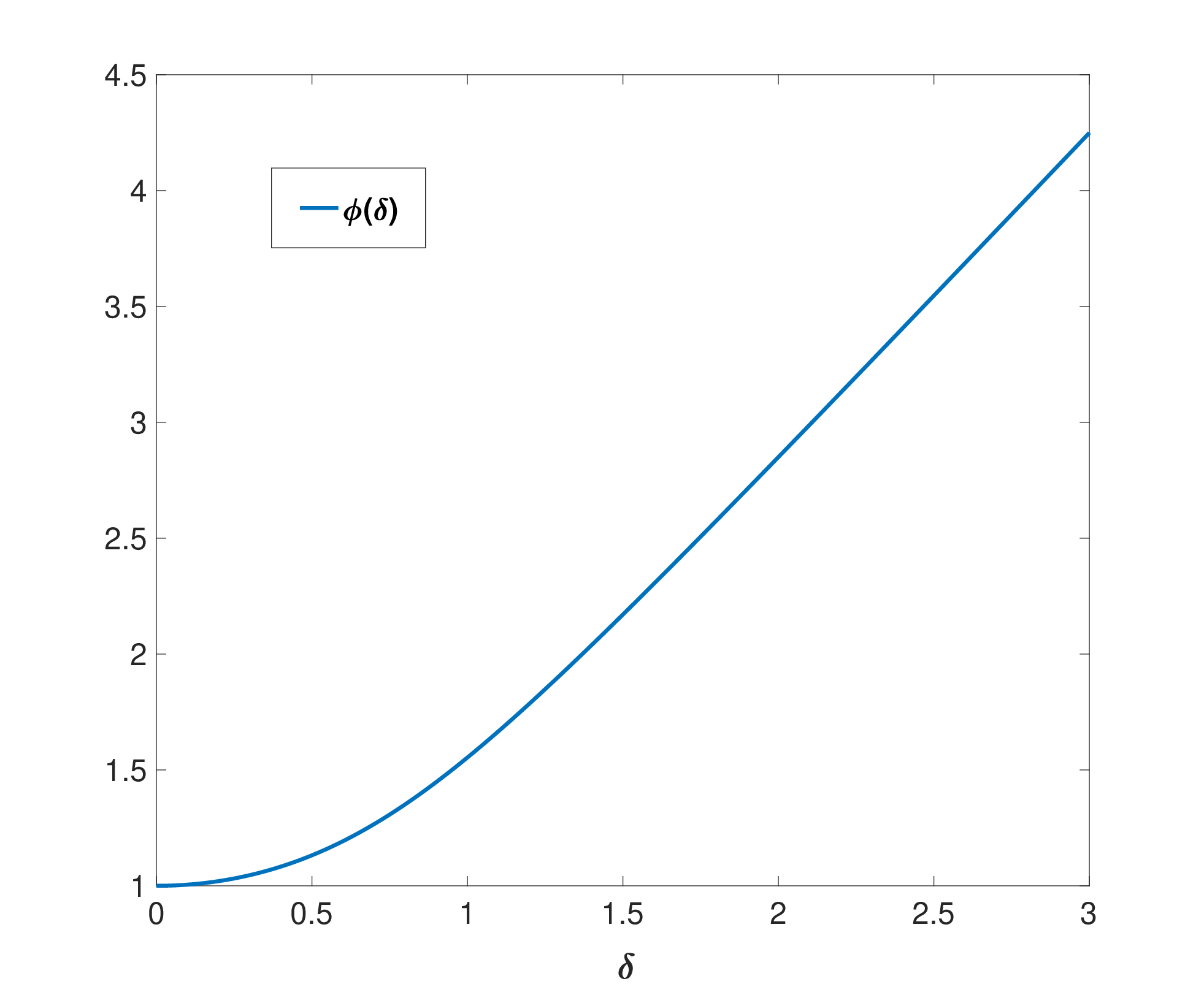}
\caption{Graph of $\phi$ with respect to $\delta$}
\label{phi}
\end{center} 
\end{figure}
we obtain
  \begin{equation*}
 \ell(\mu_{-},\sigma_{-}) \phi(\delta_{-}) \Big(1+\cH(y_{\alpha})\, \ell(\mu_{-},\sigma_{-})  \phi(\delta_{-}) \Big)
  \sim
\ell(\mu_{-},\sigma_{-}) \Big(1+\cH(y_{\alpha})\, \ell(\mu_{-},\sigma_{-}) \Big) \ ,    \quad  
   \sigma_{-}\to \infty\, .
\end{equation*}
This equivalent fonction $\ell(\mu_{-},\sigma_{-}) \Big(1+\cH(y_{\alpha})\, \ell(\mu_{-},\sigma_{-}) \Big)$ defined on $\Sigma$ generalizes the following  equivalent fonction given by the asymptotic behavior of the skin depth at high conductivity in non-magnetic materials, cf.\ \cite[Th.\,3.2]{CDFP11}:
\begin{equation}
\label{daepaisspeau-old}
   \cL(\sigma_{-},y_{\alpha})\sim  \ell(\mu_{0},\sigma_{-})\Big(1+\cH(y_{\alpha})\,  \ell(\mu_{0},\sigma_{-}) 
   \Big),
   \quad
   \sigma\to \infty\, ,
\end{equation}
where  $\ell(\mu_{0},\sigma_{-})$ is the classical skin depth \eqref{El} when  the magnetic permeability $\mu_{-}$ is the  vacuum magnetic permeability  $\mu_{0}$.
\begin{rem}[Low frequency limit]
For the same reason, since we have also $\delta_{-}\to 0$ as $\omega\to 0$, we obtain the same equivalent fonction  at low frequency and at high conductivity $\sigma_{-}$: 
  \begin{equation*}
 \ell(\mu_{-},\sigma_{-}) \phi(\delta_{-}) \Big(1+\cH(y_{\alpha})\, \ell(\mu_{-},\sigma_{-})  \phi(\delta_{-}) \Big)
  \sim
\ell(\mu_{-},\sigma_{-}) \Big(1+\cH(y_{\alpha})\, \ell(\mu_{-},\sigma_{-}) \Big) \ ,    \quad 
   \omega\to 0\, .
\end{equation*}
\end{rem}

\subsubsection*{Influence of the geometry of the interface on the magnetic skin effect}

Formula \eqref{El} shows particularly that  the larger the mean curvature of the interface $\Sigma$,  the  larger the characteristic length, for large values of $\mu_{r}$. It is a common point with the 3D Maxwell problem in non-magnetic materials (cf. \eqref{daepaisspeau-old}). 
It is also a similarity  with the 2D eddy-current model in which the larger the scalar curvature $\kappa(\xi)$ of the interface,  the  larger the characteristic length $ \cL(\mu_{-},\sigma_{-}; \xi)$, for large values of $\mu_{r}$  (cf. formula \eqref{daepaisspeau2D}).

\subsection{Proof of Theorem \ref{T1}}
\label{S4.3}

We adapt the proof which is given in \cite{DFP10} to obtain the asymptotic behavior of the skin depth at high conductivity in a non-magnetic material.  
\begin{proof}
Let $\WW(\eps)$ be the vector  field defined as $\WW(\eps)(y_{\alpha},Y_{3}) := \WW_{(\eps)}(y_{\alpha},y_{3})$ with $y_{3}=\eps Y_{3}$. The proof of \eqref{EW} is based on calculi in covariant components of $\WW(\eps)$ which can be seen also as a $1$-form field.  Using \eqref{6E4b}-\eqref{6E4c}, we have:
$$
\WW(\eps)=\big({\cW_{0,\alpha}},0\big)+\eps \big(\cW_{1,\alpha}, \fke_{1} \big)+\cO(\eps^2)\,.
$$
By definition of the modulus of a $1$-form field,  we obtain:
\[
   | \WW(\eps) |^2 =
   a^{\alpha\beta}(y_{3}){\cW_{0,\alpha}}\overline{\cW_{0,\beta}}
   + 2 \eps\, a^{\alpha\beta}(y_{3}) 
   \Re {\cW_{0,\alpha}}\overline{{\cW_{1,\beta}}} 
   + \cO(\eps^2)\,.
\]
Then using the inverse of the metric tensor (cf. Eq. \eqref{Einvmet}, Section \ref{AppA}),  and using \eqref{W0cd}-\eqref{W1cd}, we  obtain the following identities successively:
\[
\begin{split}
   | \WW(\eps) |^2 &=
   (a^{\alpha\beta}
   + 2 \eps Y_{3} b^{\alpha\beta}) \cW_{0,\alpha} \overline{\cW_{0,\beta}} 
   + 2 \eps\, a^{\alpha\beta} \Re \cW_{0,\alpha} \overline{\cW_{1,\beta}}
   + \cO\big((\eps+y_{3})^2\big) \\ 
   &= \Big[ | \bsE_{0} |^2 
   + 2\eps Y_3 b^{\alpha\beta} \sE_{0,\alpha} \overline{\sE_{0,\beta}}
   + 2 \eps\, a^{\alpha\beta} \Re \sE_{0,\alpha}
   \Big( \overline{\sE_{1,\beta}}+Y_{3} \big(\cH\,\overline{\sE_{0,\beta}}
   - b_{\beta}^{\sigma}\overline{\sE_{0,\sigma}}\big)\Big)\\
 &+ \cO\big((\eps+y_{3})^2\big) 
   \Big] \,\mathrm{e}^{-2\Re(\lambda) Y_{3}}   \\ 
        &= \Big[ | \bsE_{0} |^2   + 2 \eps\, a^{\alpha\beta} \Re \sE_{0,\alpha}
   \Big( \overline{\sE_{1,\beta}}+Y_{3} \cH\,\overline{\sE_{0,\beta}}\Big)+ \cO\big((\eps+y_{3})^2\big) 
   \Big] \,\mathrm{e}^{-2\Re(\lambda) Y_{3}} \\
   &= \Big[ 
   | \bsE_{0} |^2 + 2 \eps\Re \langle  \bsE_{0}, \bsE_{1}\rangle
   + 2 \eps Y_{3} \cH\, | \bsE_{0} |^2 
   + \cO\big((\eps+y_{3})^2\big)
   \Big]  \,\mathrm{e}^{-2\Re(\lambda) Y_{3}}\, .
\end{split}
\]
Then,  since $\bsE_{0}(y_{\alpha}) \neq 0$, the following identity enables to define $\gm(y_{\alpha},y_3;\eps)$ : 
$$
   |\WW_{(\eps)}(y_{\alpha},y_{3}) |^2 =  
   |\bsE_{0}(y_{\alpha}) |^2 \gm(y_{\alpha},y_3;\eps) \, .
$$
From the last identities we infer: 
\begin{multline*} 
   \gm(y_{\alpha},y_3;\eps)  = \\
   \Big[1+2 \eps  | \bsE_{0}(y_{\alpha}) |^{-2} 
   \Re \big\langle  \bsE_{0}(y_{\alpha}) , \bsE_{1}(y_{\alpha}) \big\rangle 
   +2 y_{3} \cH(y_{\alpha}) + \cO\big((\eps+y_{3})^2\big)
   \Big]  \,\mathrm{e}^{-2 y_{3} \Re(\lambda) / \eps} \,,
\end{multline*}
and 
\[
 \gm(y_{\alpha},0;\eps) = 
   1+2 \eps  | \bsE_{0}(y_{\alpha}) |^{-2} 
   \Re \big\langle  \bsE_{0}(y_{\alpha}) , \bsE_{1}(y_{\alpha}) \big\rangle 
   + \cO\big(\eps^2\big) \,.
\]
The definition of $\cL(\mu_{-},\sigma_{-},y_{\alpha})$ leads to the following identity:
$$
\gm\big(y_{\alpha},\cL(\mu_{-},\sigma_{-},y_{\alpha});\eps\big) / \gm(y_{\alpha},0;\eps)=\mathrm{e}^{-2}\,.
$$
Finally, we obtain Eq. \eqref{daepaisspeau}  from the last identity by doing a Taylor expansion in $\eps$, and by using the following identity (recall that  we have $\eps= \sqrt{\mu_{+}/ \mu_{-}}$, and $\lambda$ is defined  by Eq. \eqref{lambda}): 
\begin{equation*}
\eps /  \Re(\lambda) =  \ell(\mu_{-},\sigma_{-}) \phi(\delta_{-})\,. 
\end{equation*}
\end{proof}

\section{Impedance boundary conditions}
\label{S5}

As a by-product of the multiscale expansion for the electric field (cf Eqs. \eqref{6E4a}-\eqref{6E4b}) it is possible to derive impedance boundary conditions (IBCs) on the surface $\Sigma$   for the electromagnetic field. In \cite{ECCOMAS24} we developed briefly the asymptotic method for  deriving these IBCs. In this work we give elements of proof for the derivation of IBCs (Sec. \ref{IBC}), and for a uniform stability and convergence result (Sec. \ref{SValid}).

\subsection{Derivation of impedance boundary conditions}
\label{IBC}

The derivation of  IBCs is based also on a regular expansion for the magnetic field in the non-magnetic part. The magnetic field possesses the following series expansion in powers of $\eps$ in $\Omega_{+}$, cf. \cite{peron2024uniform}: 
\begin{equation}
\label{6E4aH}
   \HH^{+}_{(\eps)}(\xx)  \approx\sum_{j\geqslant0} \eps^j\HH^{+}_j (\xx)  
       \, ,\quad \xx\in\Omega_+ \, ,
\end{equation}
The terms $\HH^{+}_{j}$ are defined in \cite{peron2024uniform}. 

For all $j\in\N$, each term $\HH^{+}_{j}$ can also be defined directly from the term $\EE^{+}_{j}$ (in \eqref{6E4a}) by using the Faraday's law in $\Omega_{+}$ which write (cf \eqref{MS}): 
\begin{equation}
    \rot \ \EE^{+}_{j} - i \omega\mu_{+} \HH^{+}_{j}  = 0     \quad\mbox{in}\quad\Omega_{+}\, .
\label{EFLj}
\end{equation}
In that follows, we use this identity \eqref{EFLj} to derive IBCs  up to the third order of approximation with respect to $\eps$, cf. Eqs. \eqref{EIBC1}-\eqref{EIBC3}-\eqref{EIBC4b}. We deduce a family of simpler problems (cf. \cite{ECCOMAS24})
 \begin{equation}
  \label{E1}
   \left\{
    \begin{array}{lllll}
     \rot  \EE_{k}^{\eps} - i \omega\mu_{+} \HH_{k}^{\eps} &= &0    \quad &\mbox{in}\quad \Omega_{+}
            \\[8pt]
          \rot \HH_{k}^{\eps}  + (i\omega\varepsilon_0 - \sigma_{+}) \EE_{k}^{\eps} & = &  \jj  \quad &\mbox{in}\quad \Omega_{+}
       \\[8pt]
      \HH_{k}^{\eps}\times\nn & =
      & \D_{k,\eps}\left( (\nn\times \EE_{k}^{\eps}) \times \nn\right) & \mbox{on}\quad {\Sigma}\ 
      \\[6pt]
         \EE_{k}^{\eps} \cdot\nn &= & 0 \quad&\mbox{on}
                 \quad \partial\Omega
            \\[6pt]
            \HH_{k}^{\eps}\times\nn &=  &0\quad &\mbox{on}         \quad \partial\Omega
         \ ,
    \end{array}
  \right.
\end{equation}
where $\D_{k,\eps}$ is a surface differential operator defined  as  
 \begin{gather}
\label{D0}
\D_{0,\eps}=0 
  \ ,
\\
\label{D1}
\D_{1,\eps}= \dfrac1{\sqrt{\mu_{-}}} \sqrt[4]{\varepsilon^{2}_0+ \left(\frac{\sigma_{-}}{\omega}\right)^{2}} \,\mathrm{e}^{\di \tfrac{i}{2} \arctan\left(\frac{\sigma_{-}}{\omega\varepsilon_{0}}\right)} \Id \ ,
\\
\label{D2}
\D_{2,\eps}= \left(\dfrac1{\sqrt{\mu_{-}}} \sqrt[4]{\varepsilon^{2}_0+ \left(\frac{\sigma_{-}}{\omega}\right)^{2}} \,\mathrm{e}^{\di \tfrac{i}{2} \arctan\left(\frac{\sigma_{-}}{\omega\varepsilon_{0}}\right)}
 +\dfrac1{i\omega{\mu_{-}}} \left(\cH-\cC \right)
\right) \Id
\, , 
 \end{gather}
for $k=0,1,2$,  respectively.

\subsubsection*{First order impedance boundary condition}
Define $\HH^{+}_{0}$  from $\EE^{+}_{0}$ by using \eqref{EFLj} for $j=0$: 
\begin{equation}
    \rot \ \EE^{+}_{0} - i \omega\mu_{+} \HH^{+}_{0}  = 0     \quad\mbox{in}\quad\Omega_{+}\, .
\label{EFL0}
\end{equation}
Then  we obtain the perfectly insulating electric boundary conditions on $\Sigma$ by using \eqref{E0p}:
\begin{equation}
\label{EIBC1}
 \EE^+_{0} \cdot\nn=0 \quad\mbox{and}\quad
 \HH^{+}_{0}
 \times\nn = 0  \quad \mbox{on}\quad \Sigma \ .
\end{equation}

\subsubsection*{Second order impedance boundary condition}
Using \eqref{E0p}-\eqref{E1p}, we obtain: 
\begin{equation*}
  \rot (\EE^+_{0} +\eps  \EE^+_{1}) \times\nn = -\eps \lambda \bsE_0 \quad \mbox{on}\quad \Sigma \ .
\end{equation*}
Define $\EE^+_{1,\eps} = \EE^+_{0} + \eps  \EE^+_{1}$. Hence we have
\begin{equation}
\label{EBC1}
  \rot \EE^+_{1,\eps} \times\nn = -\eps \lambda (\nn\times\EE^+_{1,\eps})\times\nn  +\g_{1,\eps}  \quad \mbox{on}\quad \Sigma \ . 
\end{equation}
Here, $\g_{1,\eps}=\eps^{2} \lambda  (\nn\times\EE^+_{1})\times\nn$.  
Then define $\HH^{+}_{1}$  from $\EE^{+}_{1}$ by using \eqref{EFLj} for $j=1$: 
\begin{equation}
    \rot \ \EE^{+}_{1} - i \omega\mu_{+} \HH^{+}_{1}  = 0     \quad\mbox{in}\quad\Omega_{+}\, .
\label{EFL1}
\end{equation}
Setting $\HH^+_{1,\eps} = \HH^+_{0} + \eps  \HH^+_{1}$, we infer 
\begin{equation}
\label{EBC1bis}
 i \omega\mu_{+} \HH^+_{1,\eps} \times\nn = -\eps \lambda (\nn\times\EE^+_{1,\eps})\times\nn  
 +\g_{1,\eps} 
 \quad \mbox{on}\quad \Sigma \ . 
\end{equation}
Then,  by neglecting the term of order $\eps^{2}$ in \eqref{EBC1bis}, we deduce the IBC \begin{equation}
\label{EIBC2}
 i \omega\mu_{+} \HH^\eps_{1}  \times\nn = -\eps \lambda (\nn\times \EE^\eps_{1})\times\nn  \quad \mbox{on}\quad \Sigma \ . 
\end{equation}
Finally we obtain (we recall: $\lambda=\kappa_{+} \sqrt[4]{1+ \frac1{\delta_{-}^{4}}}\,\mathrm{e}^{\di i\tfrac{\theta(\delta_{-})-\pi}{2}}$, $\delta_{-} =\sqrt{{\omega\varepsilon_0} /{\sigma_{-}}}$, and $\theta(\delta_{-})=\arctan\frac1{\delta_{-}^{2}}$, cf. Eq. \eqref{lambda}, and 
$\kappa_{+}=\omega\sqrt{\varepsilon_{0}\mu_{+}}$, cf.  
 Eq. \eqref{2Eeps}) 
 \begin{equation}
\label{EIBC3}
\HH^\eps_{1}  \times\nn = \dfrac1{\sqrt{\mu_{-}}} \sqrt[4]{\varepsilon^{2}_0+ \left(\frac{\sigma_{-}}{\omega}\right)^{2}} \,\mathrm{e}^{\di \tfrac{i}{2} \arctan\left(\frac{\sigma_{-}}{\omega\varepsilon_{0}}\right)} (\nn\times \EE^\eps_{1})\times\nn  \quad \mbox{on}\quad \Sigma \ . 
\end{equation}
\begin{rem}[Comparison with the Leontovich condition for strongly absorbing material]
\label{Leon}
This impedance boundary condition \eqref{EIBC3} is different from the classical Leontovich condition for strongly absorbing material (large conductivity limit), cf. {\it e.g.} \cite{Mo03,HJN08}. However we retrieve at least formally the Leontovich condition from \eqref{EIBC3} as a first approximation when $\delta_{-} =\sqrt{{\omega\varepsilon_0} /{\sigma_{-}}}\to 0$. Indeed, from the impedance factor in \eqref{EIBC3},  straightforward calculi  based on Taylor expansions  as $\delta_{-}\to 0$  lead to the Leontovich condition: 
 \begin{equation}
\label{ELBC}
(\nn\times \EE)\times\nn = \sqrt\frac{\mu_{-}\omega}{\sigma_{-}}\,\mathrm{e}^{\di -\tfrac{i\pi}{4}}\HH  \times\nn  \quad \mbox{on}\quad \Sigma \ .
\end{equation}
\end{rem}

\subsubsection{Third order impedance boundary condition}

In the same way we derive a third order IBC with respect to the small parameter $\eps$ by collecting the first terms $\EE^+_{2,\eps} = \EE^+_{0} + \eps  \EE^+_{1} +  \eps^{2}  \EE^+_{2}$ and by using \eqref{E0p}-\eqref{E1p}-\eqref{E2p}. By construction we obtain
\begin{equation}
\label{EIBC4a}
  \rot \EE^+_{2,\eps} \times\nn = -\eps\big(\lambda (\nn\times \EE^+_{2})\times\nn -\eps\left(\cH -\cC\right)  (\nn\times\EE^+_{2,\eps})\times\nn\big)+\g_{2,\eps}    \quad \mbox{on}\quad \Sigma \ . 
\end{equation}
Here, $\g_{2,\eps}=\eps^{3}\big(\lambda +\eps\left(\cH -\cC\right) \big)   (\nn\times \EE^+_{2})\times\nn -\eps^{3}\left(\cH -\cC\right)  (\nn\times\EE^+_{1})\times\nn$.  
Then define $\HH^+_{2}$ from $\EE^+_{2}$  by using \eqref{EFLj} for $j=2$: 
\begin{equation}
    \rot  \EE^+_{2}  - i \omega\mu_{+} \HH^+_{2}  = 0     \quad\mbox{in}\quad\Omega_{+}\, .
\label{EFL2}
\end{equation} 
Setting $\HH^+_{2,\eps} = \HH^+_{0} + \eps  \HH^+_{1}+ \eps^{2}  \HH^+_{2}$, we infer 
\begin{equation}
\label{EIBC4abis}
 i \omega\mu_{+} \HH^+_{2,\eps} \times\nn = -\eps\big(\lambda (\nn\times \EE^+_{2,\eps})\times\nn -\eps\left(\cH -\cC\right)  (\nn\times\EE^+_{2,\eps})\times\nn\big) + \g_{2,\eps} 
   \quad \mbox{on}\quad \Sigma \ . 
\end{equation}
Then by neglecting the term 
$\g_{2,\eps}=\mathcal{O}(\eps^{3})$ in \eqref{EIBC4abis} we identify the  third order IBC:
\begin{equation}
\label{EIBC4b}
\HH^\eps_{2} 
 \times\nn = \left(\dfrac1{\sqrt{\mu_{-}}} \sqrt[4]{\varepsilon^{2}_0+ \left(\frac{\sigma_{-}}{\omega}\right)^{2}} \,\mathrm{e}^{\di \tfrac{i}{2} \arctan\left(\frac{\sigma_{-}}{\omega\varepsilon_{0}}\right)}
 +\dfrac1{i\omega{\mu_{-}}} \left(\cH-\cC \right)
\right)  (\nn\times \EE^\eps_{2}
 )\times\nn  \quad \mbox{on}\quad \Sigma \ . 
\end{equation}

\subsection{Error estimates for the impedance boundary conditions} 
\label{SValid}

In this section we prove the following  convergence result  on approximate models  with  impedance boundary conditions  \eqref{E1}. 
\begin{thm}
\label{TValid}
In the framework above (cf. the beginning of Sec. \ref{S3}), there exists $\eps_{\ast}>0$ such that for all $\eps\in (0, \eps_{\ast})$,   for all $k \in \{0,1,2\}$, problem   \eqref{E1}  
  has a unique solution $(\EE_{k}^{\eps}, \HH_{k}^{\eps})$ in $(\bL^{2}(\Omega_{+}))^{2}$,  
which satisfies:
\begin{equation}
\label{errork}
\| \EE_{(\eps)}-\EE_{k}^{\eps} \|_{0,\Omega_{+}}
+\|\HH_{(\eps)} -\HH_{k}^{\eps} \|_{0,\Omega_{+}}
 \leqslant C_k {\eps^{k+1}} 
 \ ,
\end{equation}
with a constant $C_k$ independent of $\eps$.  
\end{thm}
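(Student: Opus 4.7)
The plan is a classical consistency-plus-stability scheme combined with the expansion error bound of Theorem \ref{6T1}. For each $k\in\{0,1,2\}$ I would introduce the truncated partial sums
\begin{equation*}
\widetilde{\EE}_{k,\eps}^{+}:=\sum_{j=0}^{k}\eps^{j}\EE_{j}^{+},\qquad
\widetilde{\HH}_{k,\eps}^{+}:=\sum_{j=0}^{k}\eps^{j}\HH_{j}^{+},
\end{equation*}
on $\Omega_{+}$, where $\HH_{j}^{+}$ is recovered from $\EE_{j}^{+}$ through Faraday's identity \eqref{EFLj}. Then I would split the error as $\EE_{(\eps)}-\EE_{k}^{\eps}=(\EE_{(\eps)}-\widetilde{\EE}_{k,\eps}^{+})+(\widetilde{\EE}_{k,\eps}^{+}-\EE_{k}^{\eps})$, with the analogous identity for $\HH$. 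The first difference is to be controlled via Theorem \ref{6T1} directly; the second via existence, uniqueness and uniform-in-$\eps$ stability of problem \eqref{E1}, applied to this difference itself viewed as the solution of an IBC problem driven by a small interface residue.

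For the consistency step, observe that each $(\EE_{j}^{+},\HH_{j}^{+})$ solves the Maxwell system in $\Omega_{+}$ exactly (by \eqref{E0p}, \eqref{E1p}, \eqref{E2p}) and the perfectly insulating conditions on $\Gamma$, so the truncated pair fulfils the interior equations and the two $\partial\Omega$-boundary conditions of \eqref{E1} identically. On $\Sigma$, equations \eqref{EBC1bis} for $k=1$ and \eqref{EIBC4abis} for $k=2$ state precisely that the truncated sum satisfies the IBC up to a boundary residue $\widetilde{\g}_{k,\eps}=\cO(\eps^{k+1})$ in $\bH^{\frac{1}{2}}(\Sigma)$; the case $k=0$ is immediate from \eqref{EIBC1}. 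The quantitative bound on $\widetilde{\g}_{k,\eps}$ will follow from the smoothness of $\EE_{j}^{+}$ and of the profiles up to $\Sigma$ (Assumption \ref{H2} and the construction in Section \ref{AppA}).

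For the stability step I would eliminate $\HH_{k}^{\eps}$ from \eqref{E1} via Faraday's law to obtain a variational formulation for $\EE_{k}^{\eps}$ on $\bH(\rot,\Omega_{+})$ analogous to \eqref{FVE0}, augmented with a boundary sesquilinear form on $\Sigma$ encoding the IBC. The phase factor $\mathrm{e}^{\frac{i}{2}\arctan(\sigma_{-}/(\omega\varepsilon_{0}))}$ appearing in \eqref{D1}--\eqref{D2} has strictly positive real part, so the impedance term will have the correct absorption sign; combined with the bulk dissipation coming from $\sigma_{+}>0$, this should provide a G\r{a}rding-type inequality for the new sesquilinear form. Fredholm alternative together with a unique-continuation argument across $\Sigma$ would then deliver existence, uniqueness, and a uniform-in-$\eps$ a priori estimate of the schematic form
\begin{equation*}
\|\EE\|_{0,\Omega_{+}}+\|\rot\EE\|_{0,\Omega_{+}}\leqslant C\bigl(\|\jj\|_{0,\Omega_{+}}+\|\widetilde{\g}\|_{\frac{1}{2},\Sigma}\bigr)
\end{equation*}
for the IBC problem with bulk source $\jj$ and interface residue $\widetilde{\g}$.

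Applying this estimate to the pair $(\widetilde{\EE}_{k,\eps}^{+}-\EE_{k}^{\eps},\widetilde{\HH}_{k,\eps}^{+}-\HH_{k}^{\eps})$, whose bulk source vanishes and whose interface residue is $\widetilde{\g}_{k,\eps}=\cO(\eps^{k+1})$, will give $\|\widetilde{\EE}_{k,\eps}^{+}-\EE_{k}^{\eps}\|_{0,\Omega_{+}}+\|\widetilde{\HH}_{k,\eps}^{+}-\HH_{k}^{\eps}\|_{0,\Omega_{+}}\leqslant C\eps^{k+1}$, so that the triangle inequality with Theorem \ref{6T1} closes \eqref{errork}. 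The main obstacle will be the uniform-in-$\eps$ stability for $k=2$: the curvature contribution $\tfrac{1}{i\omega\mu_{-}}(\cH-\cC)$ inside $\D_{2,\eps}$, although carrying the small factor $1/\mu_{-}=\eps^{2}/\mu_{+}$, is a tangential operator of order zero that must be treated as a compact perturbation of the principal impedance operator without destroying the sign of $\Re(\lambda)>0$ in the resulting coercivity estimate.
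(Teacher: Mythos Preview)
Your proposal is correct and follows essentially the same scheme as the paper: split via the truncated sums $\sum_{j=0}^{k}\eps^{j}(\EE_{j}^{+},\HH_{j}^{+})$, control one piece by Theorem \ref{6T1} (and its magnetic analogue \eqref{6E8}), and control the other by applying the uniform stability of the IBC problem \eqref{E1} to the difference, which solves the same system with zero bulk source and interface residue $\g_{k,\eps}=\cO(\eps^{k+1})$. The only noticeable difference is that the paper does not spell out the stability argument (G{\aa}rding inequality, Fredholm alternative, sign of the impedance) but simply invokes the hypothesis $\sigma_{+}>0$ and a reference in the literature, whereas you sketch this in detail; the paper also measures $\g_{k,\eps}$ in $\bL^{2}(\Sigma)$ rather than $\bH^{1/2}(\Sigma)$.
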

\begin{proof}
Since $\sigma_{+}>0$, problem  \eqref{E1} possesses a unique solution $(\EE_{k}^{\eps}, \HH_{k}^{\eps})$ in $(\bL^{2}(\Omega_{+}))^{2}$, \Bk and there is a constant $C>0$ such that we have the following estimate
\begin{equation*}
\|\EE_{k}^{\eps} \|_{0,\Omega_+}+\| \HH_{k}^{\eps} \|_{0,\Omega_+}
 \leqslant C \| \jj\|_{0,\Omega_+} \, , 
\end{equation*}
with a constant $C$ independent of $\eps$, cf. {\it e.g.} \cite{Mo03}.  

Let us introduce  the truncated expansions $(\EE^+_{k, \eps},\HH^+_{k, \eps})$  in the non-magnetic part 
 \begin{equation*}
\EE^+_{k, \eps}:={\EE^+_0 + \eps \EE^+_1 + {{\eps^2}}  \EE^+_2+ \cdots+ {{\eps^k}}  \EE^+_k } \quad\mbox{in}\quad \Omega_{+} \ , 
\end{equation*}
and 
\begin{equation*}
\HH^+_{k, \eps}:={\HH^+_0 + \eps	\HH^+_1 + {{\eps^2}}  \HH^+_2+ \cdots+ {{\eps^k}}  \HH^+_k }  
\quad\mbox{in}\quad \Omega_{+} \ .
\end{equation*}
Then the  convergence result \eqref{errork} can be obtained as a consequence of estimates for remainders $(\RR_{k;\,\eps}^{\EE,+},\RR_{k;\,\eps}^{\HH,+})$ (cf. Section  \ref{S3})    together with a uniform stability result,  and by using the truncated expansions  $(\EE^+_{k, \eps},\HH^+_{k, \eps})$  as intermediate quantities. 

By construction of the asymptotic expansions,  we have  for $k=1,2$, (cf. \eqref{EBC1bis}-\eqref{EIBC4abis}, Section \ref{IBC})
\begin{equation}
  \label{E2}
   \left\{
    \begin{array}{lllll}
     \rot  \EE_{k, \eps} - i \omega\mu_{+} \HH_{k, \eps} &= &0    \quad &\mbox{in}\quad \Omega_{+}
       \\[8pt]
          \rot \HH_{k, \eps}  + (i\omega\varepsilon_0 - \sigma_{+}) \EE_{k, \eps}& = &  \jj  \quad &\mbox{in}\quad \Omega_{+}
       \\[8pt]
      \HH_{k, \eps}\times\nn & =
      & \D_{k,\eps}\left( (\nn\times \EE_{k, \eps}) \times \nn\right) +( i \omega\mu_{+})^{-1}\g_{k,\eps} 
      & \mbox{on}\quad {\Sigma}\ 
      \\[6pt]
         \EE_{k, \eps} \cdot\nn &= & 0 \quad&\mbox{on}         \quad \partial\Omega
                     \\[6pt]
            \HH_{k, \eps}\times\nn &=  &0\quad &\mbox{on}         \quad \partial\Omega
         \ ,
    \end{array}
  \right.
\end{equation}
Here the impedance operator $\D_{k,\eps}$ is defined  by \eqref{D1}, and \eqref{D2}, for $k=1,2$,  respectively. For $k=0$, the third equation in \eqref{E2} has to be replaced by the boundary condition \eqref{EIBC1}. 
Then define 
\begin{equation*}
  \QQ_{k;\,\eps}^{\EE,+} := \EE^+_{k, \eps} -\EE_{k}^{\eps} \quad \mbox{and} \quad   \QQ_{k;\,\eps}^{\HH,+} := \HH^+_{k, \eps} -\HH_{k}^{\eps} \, . 
\end{equation*}
By using Eqs. \eqref{E1} and \eqref{E2}, for  $k=1,2$, we infer 
\begin{equation}
  \label{E3}
   \left\{
    \begin{array}{lllll}
     \rot   \QQ_{k;\,\eps}^{\EE,+}  - i \omega\mu_{+}  \QQ_{k;\,\eps}^{\HH,+} &= &0 
   \quad &\mbox{in}\quad \Omega_{+}
       \\[8pt]
          \rot  \QQ_{k;\,\eps}^{\HH,+}  + (i\omega\varepsilon_0 - \sigma_{+}) \QQ_{k;\,\eps}^{\EE,+} & = &  0 \quad &\mbox{in}\quad \Omega_{+}
                 \\[8pt]
      \QQ_{k;\,\eps}^{\HH,+} \times\nn & =
      & \D_{k,\eps}\left( (\nn\times  \QQ_{k;\,\eps}^{\EE,+} ) \times \nn\right) +( i \omega\mu_{+})^{-1}\g_{k,\eps}     
      & \mbox{on}\quad {\Sigma}\ 
      \\[6pt]
         \QQ_{k;\,\eps}^{\EE,+}  \cdot\nn &= & 0 \quad&\mbox{on}         \quad \partial\Omega
                     \\[6pt]
             \QQ_{k;\,\eps}^{\HH,+} \times\nn &=  &0\quad &\mbox{on}         \quad \partial\Omega
         \ .
    \end{array}
  \right.
\end{equation}
 For $k=0$, we simply have $(  \QQ_{0;\,\eps}^{\EE,+}, \QQ_{0;\,\eps}^{\HH,+})=(0,0)$. 
Since $\sigma_{+}>0$, problem  \eqref{E3} possesses a unique solution $(\QQ_{k;\,\eps}^{\EE,+}, \QQ_{k;\,\eps}^{\HH,+})$ in $(\bL^{2}(\Omega_{+}))^{2}$, and since  $\| \g_{k,\eps}\|_{0,\Sigma} = \mathcal{O}({\eps^{k+1}})$, we have the following estimate 
\begin{equation*}
\| \QQ_{k;\,\eps}^{\EE,+} \|_{0,\Omega_{+}}
+\| \QQ_{k;\,\eps}^{\HH,+}  \|_{0,\Omega_{+}}
 \leqslant C_k {\eps^{k+1}} 
 \ .
\end{equation*}
 with a constant $C_{k}$ independent of $\eps$. 
 
 Finally by using this estimate together with estimates for remainders $(\RR_{k;\,\eps}^{\EE,+},\RR_{k;\,\eps}^{\HH,+})$ (cf. Thm. \ref{6T1} and Eq. \eqref{6E8}  in Section  \ref{S3}), we deduce the convergence result \eqref{errork} from the following identities
\begin{equation*}
 \EE^+_{(\eps)}-\EE_{k}^{\eps} = (\EE^+_{(\eps)}-\EE^+_{k, \eps}) + (\EE^+_{k, \eps} -\EE_{k}^{\eps})  = \RR_{k;\,\eps}^{\EE,+} +  \QQ_{k;\,\eps}^{\EE,+} \, , 
\end{equation*}
 and 
 \begin{equation*}
 \HH^+_{(\eps)}-\HH_{k}^{\eps} = (\HH^+_{(\eps)}-\HH^+_{k, \eps}) + (\HH^+_{k, \eps} -\HH_{k}^{\eps})  = \RR_{k;\,\eps}^{\HH,+} +  \QQ_{k;\,\eps}^{\HH,+}  \, . 
\end{equation*}
\Bk
\end{proof}

\section{Elements of proof for the multiscale expansion}
\label{AppA}

In the framework of Section \ref{S3} (cf. the beginning of Sec. \ref{S3}), we have the following result (recall in particular that $(y_{\alpha},y_{3})$ is a local {\em normal coordinate system} defined in  $\cU_-$,  a tubular neighborhood of the surface $\Sigma$, cf. Fig.~\ref{Fig1} in Sec. \ref{S3}):
\begin{thm}
\label{ThA1}
In the framework of Section \ref{S3}, the electric field $\EE_{(\eps)}$ possesses the asymptotic expansion:  
\begin{gather}
\label{EAa}
   \EE^+_{(\eps)}(\xx) \approx \sum_{j\geqslant0} \eps^j\EE^+_j(\xx)  \, ,
\\  
\label{EAb}
   \EE^-_{(\eps)}(\xx) \approx \sum_{j\geqslant0} \eps^j\EE^-_j(\xx;\eps) 
   \quad\mbox{with}\quad   
   \EE^-_j(\xx;\eps) = \chi(y_3) \,\WW_j(y_\alpha,\frac{y_3}{\eps})\, ,
\end{gather}
 such that 
\begin{equation*}
\WW_j(y_\alpha,Y_{3})\longrightarrow 0   \quad\mbox{as}\quad  
Y_{3}\longrightarrow \infty \, .
\end{equation*}
In \eqref{EAb} the function $\yy\mapsto\chi(y_3)$ is a smooth cut-off with support in $\overline\cU_-$ and equal to $1$ in a smaller tubular neighborhood of $\Sigma$, cf. Fig.~\ref{Fig1} in Section \ref{S3}.   

Moreover, for any $j\in\N$, we have 
\begin{equation}
\label{EAd}
    \EE^+_j\in\bH(\rot,\Omega_+)\quad\mbox{and}\quad \WW_j\in\bH(\rot,\Sigma\times\R_+).
\end{equation}
\end{thm}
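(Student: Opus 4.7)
The plan is to derive the expansion formally by plugging the ansatz \eqref{EAa}--\eqref{EAb} into the variational problem \eqref{FVE0} (equivalently, into the curl-curl equations with transmission conditions across $\Sigma$ and boundary conditions on $\Gamma$), and then to construct the terms $\EE^+_j$ and the profiles $\WW_j$ inductively. In $\Omega_+$, the coefficient $\mur(\eps)=1$, so the outer operator $\rot\rot - \kappa_{+}^2(1+i/\delta_+^2)$ has no $\eps$ dependence and the ansatz produces, at each order $j$, an inhomogeneous Maxwell problem for $\EE^+_j$. In $\Omega_-$, one first rewrites the curl-curl operator in the normal coordinate system $(y_\alpha,y_3)$ near $\Sigma$ and then performs the scaling $Y_3=y_3/\eps$. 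Under this scaling the inverse metric tensor admits the Taylor expansion
\begin{equation}
\label{Einvmet}
   a^{\alpha\beta}(y_3) = a^{\alpha\beta} + 2y_3\, b^{\alpha\beta} + \mathcal{O}(y_3^2),
\end{equation}
and the factor $\eps^{-2}$ in $\mur(\eps)^{-1}$ in $\Omega_-$ combines with the $\eps^{-2}\partial_{Y_3}^2$ arising from the scaled normal derivatives to give a leading-order operator acting on the tangential components of $\WW_j$ of the form $-\partial_{Y_3}^2 + \lambda^2\,\mathrm{Id}$, with $\lambda$ as in \eqref{lambda}, whereas the normal component $\fke_j$ is determined algebraically from surface differential operations applied to lower-order tangential profiles.

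The construction proceeds by induction on $j$. At each order I would extract, from the collected terms of size $\eps^j$, three pieces of information: (i) an ODE system in $Y_3$ for the tangential part of $\WW_j$, whose only decaying solution is an exponential $\mathrm{e}^{-\lambda Y_3}$ times a tangential trace on $\Sigma$; (ii) an algebraic expression for $\fke_j$ in terms of $\cW_{j-1}$ (and surface operators such as $\Div_\Sigma$), following the pattern of \eqref{fke1a}; and (iii) the interface data for $\EE^+_j$, obtained by imposing $[\EE\times\nn]_\Sigma=0$ and $[\mur(\eps)^{-1}\rot\EE\times\nn]_\Sigma=0$ order by order. The first transmission condition fixes the tangential trace $\bsE_j$ of $\EE^+_j$ on $\Sigma$ as the tangential part of $\cW_j(\cdot,0)$ (hence as the tangential trace of the yet-to-be-constructed $\EE^+_j$), while the second provides a Neumann-like data $\rot\EE^+_j\times\nn$ on $\Sigma$ expressed in terms of $\lambda$, $\bsE_{j-1}$ and curvature terms, as illustrated by \eqref{E1p} and \eqref{E2p}. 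Thus $\EE^+_j$ is defined as the solution of the Maxwell problem in $\Omega_+$ with these interface data and the perfectly insulating boundary conditions on $\Gamma$.

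The main obstacles are the well-posedness of the successive problems for $\EE^+_j$ in $\Omega_+$ and the exponential decay of the profiles. For the former, since $\sigma_+>0$, the operator $\rot\rot - \kappa_{+}^2(1+i/\delta_+^2)$ with PIBC on $\Gamma$ and mixed trace data on $\Sigma$ is well-posed in $\bH(\rot,\Omega_+)$; a compatibility argument is nevertheless needed to check that $\EE^+_j\cdot\nn=0$ on $\Sigma$ comes out as a consequence of the tangential trace prescription and the first equation of \eqref{FVE0} (this forces $\bsE_j$ to be effectively tangential, as noted after \eqref{E0p}). For the profiles, the decay $\WW_j(y_\alpha,Y_3)\to 0$ as $Y_3\to\infty$ follows from the fact that $\Re\lambda>0$ and that the right-hand sides generated at each step are polynomials in $Y_3$ multiplied by $\mathrm{e}^{-\lambda Y_3}$, so that the ODE can be solved with a particular solution of the same form; selecting the homogeneous part so as to discard the unbounded exponential $\mathrm{e}^{+\lambda Y_3}$ is what fixes $\WW_j$ uniquely. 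This manipulation also shows that $\WW_j\in\bH(\rot,\Sigma\times\R_+)$, since the $L^2$ integrability in $Y_3$ is granted by the exponential decay and smoothness in $y_\alpha$ is transferred from the surface data $\bsE_k$, $k\leqslant j$, by elliptic regularity applied to the problems defining $\EE^+_k$ in $\Omega_+$ (using Assumption \ref{H2} and the smoothness of $\jj$ with $\mathrm{supp}\,\jj\cap\Omega_-=\emptyset$). Iterating this inductive scheme from $j=0$ (whose first data are the ones given in Section \ref{S3.1}) produces the full family $(\EE^+_j,\WW_j)_{j\geqslant 0}$ with the stated regularity \eqref{EAd}.
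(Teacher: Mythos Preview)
Your proposal is correct and follows essentially the same route as the paper (Section~\ref{AppA}): expand the Maxwell operator in normal coordinates, rescale $Y_3=y_3/\eps$, identify at each order an algebraic relation for the normal profile component, a Maxwell problem in $\Omega_+$, and an ODE in $Y_3$ for the tangential profile, then solve inductively using $\Re\lambda>0$ for decay. Two small points to tighten: the correct induction order at step $n$ is $\fke_n \to \EE^+_n \to \cW_n$ (the Neumann data $\rot\EE^+_n\times\nn$ on $\Sigma$ involves only $\cW_{n-1}$ and $\fke_{n-2}$, after which the tangential trace $\bsE_n$ supplies the Dirichlet datum for the ODE of $\cW_n$), and the condition $\EE^+_j\cdot\nn=0$ on $\Sigma$ holds only for $j=0$ --- for $j\geqslant 1$ the normal trace is tied to $\fke_j$ through the transmission condition \eqref{AEn} and is generically nonzero, so no compatibility argument of the kind you describe is needed.
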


Hereafter, we present elements of proof of this theorem and details about the terms in asymptotics \eqref{EAa}-\eqref{EAb}. In \S\ref{AE1}, we expand the ``electric'' Maxwell operators in power series of $\eps$ inside the boundary layer $\cU_-$. We deduce in \S\ref{A2} the equations satisfied by the electric profiles, and we derive explicitly the first ones in \S\ref{A3}.

\subsection{Expansion of the operators}
\label{AE1}
Integrating by parts in the electric variational formulation \eqref{FVE0}, we find the following Maxwell transmission problem
for the electric field (we recall $ \kappa_{+}=\omega\sqrt{\varepsilon_{0}\mu_{+}}$, cf. \eqref{2Eeps})
\begin{equation}
\label{EME}
 \left\{
   \begin{array}{lll}
   \rot\rot  \EE^+_{(\eps)} - \kappa_{+}^2({1+\frac{i}{\delta_{+}^2}}) \EE^+_{(\eps)} =i \omega\mu_{+}\jj   \quad&\mbox{in}\quad \Omega_{+}
\\[0.5ex]
\eps^{2}\rot\rot  \EE^-_{(\eps)} -\kappa_{+}^2({1+\frac{i}{\delta_{-}^2}})  \EE^-_{(\eps)} =0
 \quad&\mbox{in}\quad \Omega_{-}
\\[0.5ex]
  \rot \EE^+_{(\eps)}\times\nn= \eps^{2} \rot \EE^-_{(\eps)}\times\nn \quad &\mbox{on} \quad \Sigma
\\[0.5ex]
 \EE^+_{(\eps)}\times\nn= \EE^-_{(\eps)}\times\nn  \quad &\mbox{on}\quad \Sigma
\\[0.5ex]
  \EE^+_{(\eps)} \cdot\nn= 0 \quad\mbox{and}\quad\!  \rot \EE^+_{(\eps)}\times\nn = 0  \quad &\mbox{on}\quad \Gamma .
   \end{array}
    \right.
\end{equation}
Here $\delta_{-} = \sqrt{{\omega\varepsilon_0} /{\sigma_{-}}}$ and $\delta_{+} = \sqrt{{\omega\varepsilon_0} /{\sigma_{+}}}$.  
Since $\jj\in\bH(\Div,\Omega)$, we have  also
 $\Div (i\omega\varepsilon_0 - \bs) \EE_{(\eps)}  \in\L^2(\Omega)$. Hence we deduce  the following extra transmission conditions 
 \begin{equation}
\label{AEn}
 ({1+\frac{i}{\delta_{+}^2}})\EE^+_{(\eps)}\cdot\nn=({1+\frac{i}{\delta_{-}^2}})\EE^-_{(\eps)}\cdot\nn  \quad \mbox{on}\quad \Sigma \ .
\end{equation}

We denote by $\LL(y_{\alpha}, h;D_{\alpha}, \partial_{3}^h)$ the 2d order Maxwell operator $\eps^{2}\rot\rot-\kappa_{+}^2({1+\frac{i}{\delta_{-}^2}})\Id$ set in $\cU_{-}$ in a {\em normal coordinate system}. Here $D_{\alpha}$ is the covariant derivative on the interface $\Sigma$, and $\partial_{3}^h$ is  the partial derivative with respect to the normal coordinate $y_{3}=h$. 

Let $a_{\alpha\beta}(h)$ be the metric tensor of the manifold $\Sigma_{h}$, which is the surface contained in $\Omega_-$ at a distance $h$ of $\Sigma$, see Figure~\ref{Fig1}. The metric tensor in such a coordinate system writes
\begin{equation}
\label{Emett}
a_{\alpha\beta}(h)=a_{\alpha\beta}-2 b_{\alpha\beta} h + b_{\alpha}^{\gamma}b_{\gamma\beta} h^2 \, ,
\end{equation}
and its inverse expands in power series of $h$
\begin{equation}
\label{Einvmet}
a^{\alpha\beta}(h)=a^{\alpha\beta}+2 b^{\alpha\beta} h + \mathcal{O}( h^2) \, .
\end{equation}
cf. {\it e.g.} {\rm\cite{DFP10,Fa02}}. With this metric, a three-dimensional vector field $\WW$ can be split into its normal component $\fke$ and its tangential component that can be alternatively viewed as a vector field $\cW^{\alpha}$ or a one-form field $\cW_{\alpha}$ with the relation
\begin{equation}
\label{Emet}
\cW_{\alpha} = a_{\alpha \beta}(h)\cW^{\beta}. 
\end{equation}
Subsequently, we use a property of the covariant derivative, that it acts on functions like the partial derivative:  $D_{\alpha} \fke=\partial_{\alpha} \fke$.

The operator $\LL$ expands in power of $h$ with intrinsic coefficients with respect to $\Sigma$ like the operator  $\rot\rot$, cf. {\it e.g.} \cite{CDFP11}. We make the scaling $Y_{3}=\eps^{-1} h$ to describe the boundary layer with respect to $\eps$. Then, the three-dimensional harmonic Maxwell operator in $\cU_{-}$ is written $\LL[\eps]$. This operator expands in power series of $\eps$ with coefficients intrinsic operators :
\begin{equation*}
 \LL[\eps]=\di\sum_{n=0}^{\infty} \eps^n\LL^{n} 
  \ .
\end{equation*}
We denote by $L_{\alpha}^n$ the surface components of $\LL^{n}$. With the summation convention of repeated two dimensional indices (represented by greek letters), there holds
\begin{equation}
L_{\alpha}^0(\WW)=-\partial_{3}^2 \cW_{\alpha}-\kappa_{+}^2({1+\frac{i}{\delta_{-}^2}}) \cW_{\alpha} 
  \ \  \mbox{and}  \ \ 
L_{\alpha}^1(\WW)=-2b_{\alpha}^\beta\partial_{3} \cW_{\beta} 
+ \partial_{3} D_{\alpha} \fke 
+  b_{\beta}^{\beta}\partial_{3}\cW_{\alpha}\ .
\label{EHsL0L1}
\end{equation}
Here, $\partial_{3}$ is the partial derivative with respect to $Y_{3}$. We denote by $L_{3}^n$ the transverse components of $\LL^{n}$. There holds
\begin{equation}
L_{3}^0(\WW)=-\kappa_{+}^2 ({1+\frac{i}{\delta_{-}^2}})\fke 
  \quad \mbox{and}  \quad
L_{3}^1(\WW)=\gamma_{\alpha}^{\alpha}(\partial_{3}\WW)+  b_{\beta}^{\beta}\partial_{3} \fke\ ,
\label{EHtL0L1}
\end{equation}
where $\gamma_{\alpha\beta}(\VV)=\frac12 (D_{\alpha}\cV_\beta + D_{\beta}\cV_\alpha)-b_{\alpha\beta}\sv$ is the change of metric tensor.

We denote by $\BB(y_{\alpha}, h;D_{\alpha}, \partial_{3}^h)$ the tangent trace operator $\rot\cdot\times\nn$ on $\Sigma$ in a {\em normal coordinate system}. If 
$\WW=(\cW_{\alpha},\fke)$, then
\begin{equation*}
\left( \BB(y_{\alpha}, h;D_{\alpha} , \partial_{3}^h)\WW\right)_{\alpha}=\partial_{3}^h \cW_{\alpha} - D_{\alpha} \fke \ ,
\end{equation*}
see \cite[Sec. A.4]{CDFP11}. We define $\BB[\eps]$ the operator  obtained from $\eps^2\BB$ in $\cU_{-}$ after the scaling $Y_{3}=\eps^{-1} h$. This operator expands in power of $\eps$ :
\begin{equation*}
 \BB[\eps]=\eps\BB^{0}+\eps^2\BB^{1} \ .
\end{equation*}
Thus, denoting by $B_{\alpha}^n$ the surface components of $\BB^{n}$,  we obtain
\begin{equation}
\label{EB0B1}
B_{\alpha}^0(\WW)= \partial_{3}\cW_{\alpha}   \quad \mbox{and}  \quad B_{\alpha}^1(\WW)= -D_{\alpha} \fke \ .
\end{equation}

\subsection{Equations for the coefficients of the electric field}
\label{A2}

According to the second and third equations in system \eqref{EME}, the profiles $\WW_{j}$ and the terms $ \EE_{j}^+$ of the electric field satisfy the following system
\begin{gather}
\label{ELW}
 \LL[\eps] \sum_{j\geqslant0} \eps^j\WW_j(y_{\alpha},Y_{3})=0  \ ,
   \quad \mbox{in }  \quad \Sigma \times I \ ,
   \\
  \label{EBW}
 \sum_{j\geqslant0} \eps^j \rot \EE_{j}^+ \times\nn =  \BB[\eps] \sum_{j\geqslant0} \eps^j\WW_j(y_{\alpha},0) \quad \mbox{on} \quad \Sigma \ .
\end{gather}

We identify in \eqref{ELW}-\eqref{EBW} the terms with the same power in $\eps$. The components of equation \eqref{ELW} are the collections of equations
\begin{equation}
\label{EsLVj}
\LL^0(\WW_{0})=0\ , \quad \LL^0(\WW_{1})+\LL^1(\WW_{0})=0 \ , \quad \mbox{and} \quad \di\sum_{l=0}^{n} \LL^{n-l}(\WW_{l})=0  \ ,
\end{equation}
for all $n\geqslant2$.  The surface components of the equation \eqref{EBW} write 
\begin{multline}
\label{EsBWj}
\rot \EE_{0}^+ \times\nn =0\ ,  \quad \left(\rot \EE_{1}^+ \times\nn \right)_{\alpha} = B_{\alpha}^0(\WW_{0})\ ,
\\
\mbox{and} \quad \left(\rot \EE_{n+2}^+ \times\nn \right)_{\alpha} =B_{\alpha}^0(\WW_{n+1})+B_{\alpha}^1(\WW_{n})   \ ,
\end{multline}
for all $n\geqslant0$.

According to the system \eqref{EME} and \eqref{EB0B1}, the profiles 
$\WW_n = (\cW_{n},\fke_n)$ and the terms $\EE^+_{n}$ have to satisfy, for all $n \geqslant 0$,  
\begin{equation}
\label{EE}
 \left\{
   \begin{array}{clll}
   (i) & \quad &
   -\lambda^2 \fke_n =  \di \sum_{j = 0}^{n-1} L_3^{n-j}( \WW_j) 
 \quad&\mbox{in}\quad \Sigma \times I
     \\[0.9ex]
     (ii) & \quad &
   \rot\rot  \EE^+_{n} - \kappa_{+}^2  ({1+\frac{i}{\delta_{+}^2}})\EE^+_{n} = \delta_n^0 i \omega\mu_{+}\jj   \quad&\mbox{in}\quad \Omega_{+}
\\[0.9ex]
   (iii)  & \quad &  \left( \rot \EE_{n}^+ \times\nn \right)_{\alpha} =   \partial_{3}\cW_{n-1,\alpha}  -D_{\alpha} \fke_{n-2}
  \quad &\mbox{on} \quad \Sigma
  \\[0.9ex]
   (iv) & \quad &
  \EE^+_{n} \cdot\nn= 0 \quad\mbox{and}\quad\!  \rot \EE^+_{n}\times\nn = 0 \quad &\mbox{on}\quad \Gamma 
\\[0.8ex]
   (v) & \quad &
   \partial_{3}^2 \cW_{n,\alpha}-\lambda^2 \cW_{n,\alpha} =  \di \sum_{j = 0}^{n-1} L_\alpha^{n-j}( \WW_j) 
 \quad&\mbox{in}\quad \Sigma \times I
 \\[0.5ex]
    (vi) & \quad &
\cW_{n}= \nn\times \bsE_n\times\nn \quad &\mbox{on}\quad \Sigma \ 
   \end{array}
    \right.
\end{equation}
where $\lambda=\kappa_{+} \sqrt[4]{1+ \frac1{\delta_{-}^{4}}}\,\mathrm{e}^{i\frac{\theta(\delta_{-})-\pi}{2}}$ and $\theta(\delta_{-})=\arctan\frac1{\delta_{-}^{2}}$ (so that  $-\lambda^2=\kappa_{+}^2({1+\frac{i}{\delta_{-}^2}})$ and $\Re \lambda>0$),
 and $\bsE_n$ denotes the trace of $\EE^+_{n}$ on $\Sigma$. In \eqref{EE}, we use the convention that the sums are $0$ when $n=0$. Note that the transmission condition \eqref{AEn}    implies the extra condition
 \begin{equation}
\label{AEnbis}
  ({1+\frac{i}{\delta_{+}^2}}) \EE^+_{n} \cdot\nn= ({1+\frac{i}{\delta_{-}^2}}) \fke_n \quad \mbox{on}\quad \Sigma.
\end{equation} 
 Hereafter,  using the set of equations \eqref{EE}-\eqref{AEn}, we determine the terms $\WW_{n}$ and $\EE^+_{n}$ by induction.

\subsection{First terms of the asymptotics for the electric field}
\label{A3}
According to equation $(i)$ in \eqref{EE}, the normal component $\fke_{0}$ of the first profile in the magnetic conductor vanishes:
\begin{equation}
\label{Efke0}
\fke_{0}=0\ , 
\end{equation}
because $\kappa_{+}\neq 0$, thus $\lambda\neq0$.

Hence, according to equations $(ii)$-$(iv)$ in system \eqref{EE} for $n=0$,  the first asymptotic of the electric field in the dielectric part solves the following problem 
\begin{equation}
\label{E0pbis}
 \left\{ 
   \begin{array}{lll}
    \rot\rot  \EE^+_{0} - \kappa_{+}^2  ({1+\frac{i}{\delta_{+}^2}})\EE^+_{0} =  i \omega\mu_{+}\jj   \quad&\mbox{in}\quad \Omega_{+}
\\[0.8ex]
  \EE^+_{0} \cdot\nn= 0 \quad\mbox{and}\quad\!  \rot \EE^+_{0}\times\nn = 0 \quad &\mbox{on}\quad \Sigma \cup\Gamma .
   \end{array}
    \right.
\end{equation}
Thus the trace $\bsE_0$ of $\EE^+_0$ on the interface $\Sigma$ is \emph{tangential}.

From equations $(v)$-$(vi)$ in system \eqref{EE} for $n=0$, $\cW_{0}$ satisfies the following ODE 
\begin{equation}
\left\{
   \begin{array}{lll}
\partial_{3}^2\cW_{0}(.,Y_{3}) -\lambda^2 \cW_{0}(.,Y_{3})&=0 \quad &\mbox{for} \quad Y_{3}\in I \ ,
\\[0.6ex]
\cW_{0}(.,0)&=(\nn \times\bsE_{0})\times\nn \quad &\mbox{on}\quad \Sigma \ .
\end{array}
  \right.
\label{EW0}
\end{equation}
The unique solution of \eqref{EW0} such that $\cW_{0}\rightarrow 0$ when $Y_{3}\rightarrow\infty$, is the tangential field
\begin{equation*}
\cW_{0}(y_{\beta},Y_{3}) = \bsE_{0}(y_{\beta})\;\mathrm{e}^{-\lambda Y_{3}}\, .
\end{equation*}

Combining with \eqref{Efke0}, we find that the first profile in the conductor region is exponential with the complex rate $\lambda$:
\begin{equation}
\WW_{0}(y_{\beta},Y_{3}) = \bsE_{0} (y_{\beta})\,\mathrm{e}^{-\lambda Y_{3}} \, .
\label{WOcdbis}
\end{equation}

 The next term which is determined in the asymptotics is the normal component $\fke_1$ of the profile $\WW_1$ given by equation $(i)$ of \eqref{EE} for $n = 1$. We obtain 
\begin{equation}
\label{Efke1}
\fke_{1}(y_{\beta},Y_{3}) =\lambda^{-1} D_{\alpha}\bsE^{\alpha}_{0}(y_{\beta}) \;\mathrm{e}^{-\lambda Y_{3}}\,.
\end{equation}
According to \eqref{EE} $(ii)$-$(iv)$, the next term in the dielectric region solves:
\begin{equation}
\label{E1pbis}
 \left\{
   \begin{array}{lll}
    \rot\rot \EE^+_1 - \kappa_{+}^2({1+\frac{i}{\delta_{+}^2}}) \EE^+_{1}  = 0  \quad&\mbox{in}\quad \Omega_{+}
\\[0.8ex]
   \rot\EE^+_1\times\nn= - \lambda \bsE_0 \quad &\mbox{on}\quad \Sigma
\\[0.8ex]
   \EE^+_{1} \cdot\nn= 0 \quad\mbox{and}\quad\!  \rot \EE^+_{1}\times\nn = 0   
   \quad &\mbox{on}\quad \Gamma.
   \end{array}
    \right.
\end{equation}

Recall that $\bsE_1$ is the trace of $\EE^+_1$ on the interface $\Sigma$. We denote by $\sE_{1,\alpha}$ its tangential components. According to equations $(v)$-$(vi)$ in \eqref{EE} for $n = 1$, $\cW_{1}$ satisfies the following ODE (for $Y_3\in I$)
\begin{equation}
\label{EW1}
\left\{
   \begin{array}{lll}
   \partial_{3}^2\cW_{1,\alpha}(.\,,Y_{3}) -\lambda^2 \cW_{1,\alpha}(.\,,Y_{3}) = -2b_{\alpha}^{\sigma}\partial_{3}\cW_{0,\sigma}(.\,,Y_{3})+b_{\beta}^{\beta}\partial_{3}\cW_{0,\alpha}(.\,,Y_{3})
\\[3pt]
\cW_{1,\alpha}(.\,,0)=\sE_{1,\alpha}(.\,,0)\ .
   \end{array}
\right.
\end{equation} 
From \eqref{WOcdbis}, the unique solution of \eqref{EW1} such that $\cW_{1}\rightarrow 0$ when $Y_{3}\rightarrow\infty$ is the following profile 
\begin{equation}
\label{cW1cd}
   {\cW_{1,\alpha}}(y_{\beta},Y_{3}) =
   \Big[ \sE_{1,\alpha}+Y_{3} \big(\cH\,\sE_{0,\alpha}
   -b^{\sigma}_{\alpha} \sE_{0,\sigma}\big)\Big](y_{\beta}) \;\mathrm{e}^{-\lambda Y_{3}},
   \quad\alpha=1,2\,. 
\end{equation}    

 The next term which is determined in the asymptotics is the normal component $\fke_2$ of the profile $\WW_2$ given by equation $(i)$ of \eqref{EE} for $n = 2$. We obtain 
\begin{equation}
\label{Efke2}
\fke_{2}(y_{\beta},Y_{3}) =-\lambda^{-2}\left( L_3^{2}( \WW_0) + L_3^{1}( \WW_1) \right) \, .  
\end{equation}
According to \eqref{EE} $(ii)$-$(iv)$ for $n=2$, the next term in the dielectric region solves the problem:
\begin{equation}
\label{E2pbis}
 \left\{
   \begin{array}{lll}
    \rot\rot \EE^+_2 - \kappa_{+}^2({1+\frac{i}{\delta_{+}^2}}) \EE^+_{2}  = 0  \quad&\mbox{in}\quad \Omega_{+}
\\[0.8ex]
   \left(\rot\EE^+_2\times\nn\right)_{\alpha}=  \partial_{3}\cW_{1,\alpha} 
   -D_{\alpha} \fke_{0}  \quad &\mbox{on}\quad \Sigma
\\[0.8ex]
   \EE^+_{2} \cdot\nn= 0 \quad\mbox{and}\quad\!  \rot \EE^+_{2}\times\nn = 0   
   \quad &\mbox{on}\quad \Gamma.
   \end{array}
    \right.
\end{equation}
Using \eqref{Efke0} and \eqref{cW1cd}, we obtain: 
\begin{equation}
\label{E2rotExnbis}
\left(\rot\EE^+_2\times\nn\right)_{\alpha}= -\lambda \sE_{1,\alpha}+ \cH\,\sE_{0,\alpha}   -b^{\sigma}_{\alpha} \sE_{0,\sigma}  \quad \mbox{on}\quad \Sigma \ ,
\end{equation}
We infer the boundary condition: 
\begin{equation}
\label{E2rotExn}
\rot\EE^+_2\times\nn= -\lambda (\nn \times\bsE_{1})\times\nn + \left(\cH-\cC \right) \bsE_{0}  \quad \mbox{on}\quad \Sigma \ .
\end{equation}
Here $(\cC  \bsE_{0})_{\alpha}=b^{\sigma}_{\alpha} \sE_{0,\sigma}$.

\newpage

\bibliographystyle{plain}
\bibliography{biblio}

\end{document}